\newtheorem{teo}{Theorem}[section]
\newtheorem{cor}[teo]{Corollary}
\newtheorem{lemmino}[teo]{Lemma}
\newtheorem{prop}[teo]{Proposition}
\theoremstyle{definition}
\newtheorem{defin}[teo]{Definition}
\newtheorem{oss}[teo]{Remark}
\numberwithin{equation}{section}
\begin{document}

%%%%% To ease editing, for IMPAN journals add:

\baselineskip=17pt

%%%%%%%%%%%%%%%%

\title{An upper bound for the genus
 of a curve  without points of small degree}

\author{Claudio Stirpe\\
Dipartimento di Matematica, Universit\`a di Roma 'Sapienza'\\
Piazzale Aldo Moro 00185 Rome, Italy\\
E-mail: stirpe@mat.uniroma1.it}

\date{February 2012}

\maketitle

%% Classification and key words; note that the 2010 classification is used:

\renewcommand{\thefootnote}{}

\footnote{2010 \emph{Mathematics Subject Classification}: Primary 11G45; Secondary 11R37.}

\footnote{\emph{Key words and phrases}: Rational points, class field theory.}

\renewcommand{\thefootnote}{\arabic{footnote}}
\setcounter{footnote}{0}

%%%%%%%%

\begin{abstract}
In this paper we prove that for any prime $p$ there is a constant $C_p>0$
such that for any $n>0$ and for any $p$-power $q$ there is a smooth, projective,
absolutely irreducible curve over $\mathbb{F}_q$ of genus $g\leq C_p q^n$ without 
points of degree smaller than $n$.
\end{abstract}

\section{Introduction}

Let $X$ be a smooth, projective, absolutely irreducible
curve over the finite field $\mathbb{F}_q$ and let $K$
be the function field of $X$.
For any integer $n>0$ let $a_n$ denote the number of places
of $K$ of degree~$n$. %(see \cite{st93})
Then $N_n= \sum_{d|n} d a_d$ is the number of rational points over the constant field
extension $K \mathbb{F}_{q^n}$. The Weil inequality (see \cite{we71}) states that
$$|N_n-q^n-1|\leq 2 g q^{n/2},$$
where $g$ is the genus of the curve. 
A search for curves with many points, motivated by applications
in coding theory, showed that this bound is optimal when the genus $g$
is small compared to $q$ (see \cite{futo96} for further details).
When $g$ is large compared to $q$ sharper estimates hold (see for example \cite{ih81} 
for an asymptotic result or also \cite{st93} Chapter V).
A similar problem arises from finding curves without points of degree $n$ when
$n$ is a positive integer. In particular when $X$ has
no points over $\mathbb{F}_{q^n}$ then $g\geq \frac{q^n-1}{2 q^{n/2}}.$ 
The genus $2$ case was already considered in \cite{mana}. Moreover in a recent paper,  E. Howe,
K. Lauter and J. Top \cite{hola05} show that the previous bound
is not always sharp when $n=1$ and $g=3$ or $4$.
In the same paper they cite an unpublished result of P. Clark and N. Elkies
that states that for every fixed prime $p$ there is a constant
$C_p>0$ such that for any integer $n>0$,
there is a projective curve over $\mathbb{F}_p$ of genus
$g\leq C_p n p^n$ without places of degree smaller than $n$.

In this paper we prove that this bound is not optimal. %when $q=2$.
In fact we prove the following result.

\begin{teo}\label{gol}
For any prime $p$ there is a constant $C_p>0$ such that 
for any $n>0$ and for any power $q$ of $p$ there is a projective curve over $\mathbb{F}_q$ of genus
$g\leq C_p q^n$ without points of degree strictly smaller than $n$.
\end{teo}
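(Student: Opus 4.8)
The plan is to use class field theory to construct over $\mathbb{F}_q$ a cyclic extension $F$ of the rational function field $K=\mathbb{F}_q(T)$ that has no places of small degree, controlling its genus by the conductor--discriminant formula. I would first recast the goal: a place of $F$ of degree $e$ lies over a place $P$ of $K$ of degree $d\mid e$, so it suffices to arrange that (i) every place $P$ of $K$ with $\deg P<n$ is inert in $F$ with residue degree $f_P\ge n/\deg P$, and (ii) $F/K$ is ramified only over places of $K$ of degree $\ge n$. Choosing $F/K$ cyclic of prime degree $\ell\ge n$ (one can take $\ell\ne p$, so the ramification is tame) reduces (i) to the single requirement that every place of $K$ of degree $<n$ be non-split, since a nontrivial Frobenius in $\mathbb{Z}/\ell$ automatically has order $\ell\ge n$. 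Such an $F$ then has no place of degree $<n$, and Riemann--Hurwitz gives $g_F\approx\tfrac{\ell-1}{2}\deg\mathfrak f$, where $\mathfrak f$ is the (squarefree) conductor; so the genus bound $g_F\le C_pq^n$ translates into a bound on $\deg\mathfrak f$. It is convenient to note that, by base change along $\mathbb{F}_p\subset\mathbb{F}_q=\mathbb{F}_{p^m}$, it suffices to treat $q=p$ with $n$ replaced by $mn$.

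Next I would produce $F$ using explicit class field theory for $K$ — cyclotomic function fields attached to the Carlitz module, and more generally ray class fields. Degree-$\ell$ cyclic extensions ramified only inside a modulus $\mathfrak m$ correspond to characters $\chi$ of order $\ell$ of the ray class group $\mathrm{Cl}_{\mathfrak m}(K)$, and the Frobenius of a prime $P\nmid\mathfrak m$ is $\chi([P])$, so $P$ is non-split precisely when $\chi([P])\ne1$. The idea is to take $\mathfrak m$ a product of primes of degree $\approx n$ with $\deg\mathfrak m\approx C_pq^n/\ell$ — so that the genus stays within budget — and then to exhibit an admissible $\chi$ for which $\chi([P])\ne1$ for all $\approx q^{n-1}$ places $P$ of degree $<n$ simultaneously. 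Here one uses that a fixed such $P$ is non-split for all but a $1/\ell$-proportion of the admissible characters, together with a counting argument comparing this against the number of conditions; I expect one must exploit reciprocity for the $\ell$-th power residue symbols in $\mathbb{F}_q[T]$ to control the joint behaviour across the small places rather than relying on naive independence.

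A separate issue is the place at infinity of $K$: in the naive cyclotomic construction $\infty$ splits, creating rational points on $F$, so I would work with the sign-normalized Drinfeld--Hayes variant (or pass to the sub-extension in which $\infty$ is inert) so that $\infty$ too becomes inert, and then re-verify (i). Once $F$ is pinned down, the genus is computed exactly from the conductor--discriminant formula — contribution $(\ell-1)\deg\mathfrak f$ from the tame ramified primes plus a bounded term at $\infty$ — and one checks $g_F\le C_pq^n$ for a suitable $C_p$.

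The main obstacle, I anticipate, is exactly the tension in the choice of $\mathfrak m$: it must be rich enough that a single character kills splitting at all $\approx q^{n-1}$ places of degree $<n$, yet of total degree only $\approx q^n/\ell$ with $\ell\approx n$, so that the genus avoids the spurious factor of $n$ present in the Clark--Elkies bound. Making the existence argument close in this range — in particular when $q$ is small and $n$ is large, where a plain union/counting bound is too weak — is the crux; I would expect it to require a carefully structured choice of $\mathfrak m$ built from the residue-symbol reciprocity (or a short iterated construction that amortizes the ramification over a few steps), and getting the behaviour at $\infty$ correct is the other place where the argument can fail.
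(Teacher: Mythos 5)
Your outline correctly identifies the crux, but it leaves that crux unresolved, and in the specific framework you choose it cannot be resolved. You propose a cyclic extension $F/K$ of prime degree $\ell$ with $\ell\geq n$ (so $\ell\approx n$ to keep the genus within budget, since $g_F\approx\frac{\ell-1}{2}\deg\mathfrak f$ forces $\deg\mathfrak f\approx q^n/\ell$), and you then need one character $\chi$ of order $\ell$ with $\chi([P])\neq 1$ simultaneously for all places $P$ of degree $<n$. For each such $P$ the characters with $\chi([P])=1$ form a subgroup of index at most $\ell$ of the admissible characters, so each place excludes at best a $1/\ell$-proportion of them --- and this is a single linear condition no matter how many primes you pack into $\mathfrak m$, because the target group $\mathbb Z/\ell$ is cyclic of prime order. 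Since there are roughly $q^{n}/n$ places of degree $<n$ and $1/\ell\approx 1/n$, every counting or union-type argument is short by a factor of about $q^{n}/n^2$, and no reciprocity for $\ell$-th power residue symbols changes the index of these subgroups. You flag this yourself ("a plain union/counting bound is too weak") and defer to an unspecified "carefully structured choice of $\mathfrak m$" or "iterated construction"; that deferred step is the entire theorem, so the proposal has a genuine gap at its center.

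The paper escapes this trap by abandoning the cyclic-of-prime-degree template. It takes $\mathfrak m$ to be a sum of $\alpha\approx n/\log_q n$ distinct places of small degree $m\approx\log_q n$ (plus a second block of degree $l$ used to tune the total degree), so that the relevant ray class quotient is a product of $\alpha$ cyclic factors each of order $\frac{q^m-1}{q-1}>n$, and the whole extension has degree $d\approx q^{n}/n$ while $\deg\mathfrak m\approx n$ (hence genus $\approx\frac12\deg(\mathfrak m)\,d=O(q^n)$). The condition is no longer "Frobenius nontrivial" but "Frobenius of order $\geq n/\deg P$", which fails only if the Frobenius is small in \emph{every} one of the $\alpha$ coordinates; since each prime $s$ dividing $\frac{q^m-1}{q-1}$ satisfies $s>2m$, the proportion of bad extensions per place is about $(2m)^{-\alpha}$, superexponentially small, and this beats the count of places of degree $<n/\log_q n$. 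Places of intermediate degree (between $n/\log_q n$ and $n$) are handled separately by an effective Chebotarev bound. This product structure is precisely the amplification mechanism your single character lacks. (Your reduction to $q=p$ by constant field extension, and your genus bookkeeping via the conductor--discriminant formula, do match the paper; the issue with the split place at infinity is sidestepped there by working with the full family of $d$ geometric extensions rather than one normalized cyclotomic field.)
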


We show the existence of such curves by means of class field theory. %when $q=p$.
The basic facts and definitions about this topic are showed in the next section. 
%In the second Chapter we give the basic definitions and results about the arithmetic of function fields.
 In the third section we generalize a result in \cite{arta} about the number of ray class field extensions with
given conductor $\mathfrak{m}$ and we prove some consequences concerning cyclic extensions. In section 4 the
estimate of Theorem \ref{gol} is proved. A table of examples for $q=2$ and $n<20$ is given at the end of the paper.

\section{Background and notation}
%By a curve over $\mathbb{F}_q$ we always mean a smooth, projective, geometrically
%irreducible curve over the finite field $\mathbb{F}_q$ of characteristic $p$. Let $X$
%be such a curve and let $K$ be the associated function field.

%Let $Y$ be a curve with associated function field $L$. A morphism  $f:Y\rightarrow X$ is a covering
%of $X$ if it is surjective and separable (see \cite{st93} Appendix A).
%A covering is abelian if the associated extension $L/K$ is Galois with abelian Galois group. 
Througout the paper we consider the function fields associated to the projective, non singular, geometrically
irreducible curves over the finite field $\mathbb{F}_q$ of characteristic $p$.

The set of the places of the function field $K$ is denoted by $\mathcal{P}_K$ and the set of
divisors of $K$ is denoted by $\mathcal{D}_K$. The degree zero divisors
are denoted by $\mathcal{D}^0_K$. We can associate to every element $z\in K$
 its principal divisor $(z)\in \mathcal{D}^0_K$.
The set of principal divisors is denoted by $Prin(K)$.
 %It is a well-known fact that the order of the quotient group 
The number $h_K=|\mathcal{D}^0_K/Prin(K)|$ is finite
and it is called the divisor class number of $K$. %(see \cite{st93}, Chapter V).

We denote by $J_K$ and $C_K$ the idele group and the class group of $K$ (see \cite{nx}, Chapter 2).

In the sequel we use ray class fields for constructing curves. Let $S$
be a finite non-empty set of places and let 
$\mathfrak{m}=\sum n_P P$ be an effective divisor of the
function field $K$ with support disjoint
from $S$. The $S$-congruence subgroup modulo $\mathfrak{m}$ is the subgroup of $J_K$
$$J^\mathfrak{m}_S=\prod_{P\in S} \hat{K}^*_P \times \prod_{P\not\in S} \hat{U}_{P}^{(n_P)}.$$
where $\hat{K}^*_P$ is the completion of $K$ at the place $P$ and 
$\hat{U}_P^{(n_P)}$ is the $n_P$-th unit group.
 %$$\hat{U}^{(n_P)}_P=\{x\in\hat{U}_P | \; v_P(x-1)\geq n_P\},$$
 %when $n_P>0$ and $\hat{U}^{(0)}_P=\hat{U}_P$.

\begin{defin}\label{rclgroup}
A ray class group is a subgroup  $C^\mathfrak{m}_S$ of $C_K$ of the form 
$$C^\mathfrak{m}_S=(K^* J^\mathfrak{m}_S)/K^*$$
where $J_S^\mathfrak{m}$ is a $S$-congruence subgroup modulo $\mathfrak{m}$.
\end{defin}

The index of $C^\mathfrak{m}_S$ in $C_K$ is finite %(see \cite{nx} Chapter 2 Section 4).
and we denote by $K^\mathfrak{m}_S$ the function field associated
to the subgroup $C^\mathfrak{m}_S$ by
the Artin map (see \cite{nx}, Chapter 2).

The following result resumes many useful formulas for computing the genus of a ray class field (see \cite{au00}, Section 1). 

\begin{teo}\label{ray}
Let $K$ be a function field over the constant field
$\mathbb{F}_q$ of genus $g_K$ and let $h_K$ be the divisor class number of $K$.
Let $S$ be a place of degree $d$ and $\mathfrak{m}=\sum_{i=1}^{k} m_i P_i$
be an effective divisor of $K$ where $P_i$ are distinct places of degree $n_i$ for $i=1,...,k$
such that $S\not\in Supp(\mathfrak{m})$ and $k\geq 1$ is a non negative integer.
Then the ray class field $K^{\mathfrak{m}}_S$ is a function field over
$\mathbb{F}_{q^d}$. The degree $[K^\mathfrak{m}_S:K]$ is equal to 
$$d h_K \prod_{i=1}^{k} \frac{(q^{n_i}-1)q^{(m_i-1)n_i}}{q-1}.$$
The genus $g_{K^\mathfrak{m}_S}$ of $K^\mathfrak{m}_S$ is given by
\begin{equation}\label{ray2}g_{K^\mathfrak{m}_S}=1+\frac{h_K \prod_i (q^{n_i}-1)}{2 (q-1)} (2 g_K-2+deg(\mathfrak{m})-\sum_i \frac{deg(P_i)q^{(m_i-1)n_i}}{q^{n_i}-1}).
\end{equation}
\end{teo}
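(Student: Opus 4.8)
The plan is to prove all three assertions by reducing them, through Artin reciprocity, to index computations inside the idele class group. Since $K^{\mathfrak{m}}_S$ is the field fixed by $C^{\mathfrak{m}}_S=(K^*J^{\mathfrak{m}}_S)/K^*$, reciprocity identifies $\mathrm{Gal}(K^{\mathfrak{m}}_S/K)$ with $C_K/C^{\mathfrak{m}}_S$, so that $[K^{\mathfrak{m}}_S:K]=[J_K:K^*J^{\mathfrak{m}}_S]$. To evaluate this index I would interpose the conductor-free $S$-congruence subgroup $J_S=\hat{K}^*_S\times\prod_{P\neq S}\hat{U}_P^{(0)}$, giving $J^{\mathfrak{m}}_S\subseteq J_S\subseteq J_K$ and the factorization $[J_K:K^*J^{\mathfrak{m}}_S]=[J_K:K^*J_S]\cdot[K^*J_S:K^*J^{\mathfrak{m}}_S]$, which I treat separately.

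For the first factor I would push everything through the divisor map $J_K\to\mathcal{D}_K$, whose kernel is $\prod_P\hat{U}_P^{(0)}$: here $K^*$ maps onto $Prin(K)$ and $J_S$ onto the subgroup $\mathbb{Z}\cdot S$, so $J_K/K^*J_S\cong(\mathcal{D}_K/Prin(K))/\langle[S]\rangle$. Combining the degree exact sequence with $\deg[S]=d$ yields $0\to\mathcal{D}^0_K/Prin(K)\to(\mathcal{D}_K/Prin(K))/\langle[S]\rangle\to\mathbb{Z}/d\mathbb{Z}\to0$, so this factor is $d\,h_K$. For the second factor, the kernel of $J_S\twoheadrightarrow K^*J_S/K^*J^{\mathfrak{m}}_S$ is $(J_S\cap K^*)J^{\mathfrak{m}}_S$, so the factor equals $[J_S:J^{\mathfrak{m}}_S]/(q-1)$: the local index $[J_S:J^{\mathfrak{m}}_S]=\prod_i[\hat{U}_{P_i}^{(0)}:\hat{U}_{P_i}^{(m_i)}]=\prod_i(q^{n_i}-1)q^{(m_i-1)n_i}$ is read off from the standard filtration of the units over the residue field $\mathbb{F}_{q^{n_i}}$, while $J_S\cap K^*=\mathbb{F}_q^*$ collapses to $J^{\mathfrak{m}}_S\cap K^*=\{1\}$ (a nonzero constant congruent to $1$ modulo some $P_i$ is $1$), which removes exactly one factor $q-1$. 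Multiplying gives the stated degree. The constant field is obtained from the same data: the idelic degree carries $C^{\mathfrak{m}}_S$ onto $\deg(J^{\mathfrak{m}}_S)=d\mathbb{Z}$ (only the component $\hat{K}^*_S$ contributes valuations, and $\deg S=d$), so the image of $C^{\mathfrak{m}}_S$ in $\mathrm{Gal}(\overline{\mathbb{F}}_qK/K)=\widehat{\mathbb{Z}}$ is $d\widehat{\mathbb{Z}}$, whence the maximal constant subextension has degree $d$ and $K^{\mathfrak{m}}_S$ lives over $\mathbb{F}_{q^d}$.

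For the genus I would apply the Hurwitz formula to the \emph{geometric} extension $K^{\mathfrak{m}}_S/K\mathbb{F}_{q^d}$, both having constant field $\mathbb{F}_{q^d}$ and the latter genus $g_K$, so that $2g_{K^{\mathfrak{m}}_S}-2=\tfrac1d[K^{\mathfrak{m}}_S:K](2g_K-2)+\deg\mathrm{Diff}$. Local class field theory shows the extension is unramified outside $\mathrm{Supp}(\mathfrak{m})$ and that $S$ splits completely (its local component $\hat{K}^*_S$ lies in $J^{\mathfrak{m}}_S$, so its decomposition group is trivial), so the different is supported above the $P_i$. I would compute $\deg\mathrm{Diff}$ by the conductor--discriminant formula, writing $\deg\mathrm{disc}=\sum_i n_i\sum_{\chi}f_{P_i}(\chi)$ over characters $\chi$ of $\mathrm{Gal}(K^{\mathfrak{m}}_S/K)$. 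The key point is that the number of $\chi$ with $f_{P_i}(\chi)\leq c$ equals the degree of the ray class field whose conductor at $P_i$ has been lowered to $c$, already given by the index formula above; summing these counts across the filtration steps telescopes, via a geometric series in $q^{n_i}$, into a closed form for $\sum_\chi f_{P_i}(\chi)$, and substituting into Hurwitz produces (\ref{ray2}).

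The main obstacle is exactly this last character count: one must verify that the local unit filtration $\hat{U}_{P_i}^{(c)}$ is matched with the ramification filtration on the Galois group, so that ``lowering the conductor at $P_i$ to $c$'' genuinely corresponds to the characters trivial on $\hat{U}_{P_i}^{(c)}$, and then run the summation carefully — in particular in the degenerate case where deleting $P_i$ from $\mathfrak{m}$ leaves a trivial conductor, where the constant factor $\mathbb{F}_q^*$ re-enters the count. Once this bookkeeping is settled the remaining computation is a routine simplification, and the degree and constant-field claims follow directly from the reciprocity translation above.
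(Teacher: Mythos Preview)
The paper does not actually prove this theorem: it is stated as a summary of ``many useful formulas for computing the genus of a ray class field'' and is attributed entirely to Auer \cite{au00}, Section~1, with no argument given in the text. So there is no proof in the paper to compare against; your proposal is supplying what the paper outsources.

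That said, your route is the standard one and is essentially correct. The index factorization through $K^*J_S$ is the right move: the identification $J_K/K^*J_S\cong(\mathcal{D}_K/Prin(K))/\langle[S]\rangle$ is valid because $U_0\subseteq J_S$, and your short exact sequence correctly yields the factor $d\,h_K$. For the second factor, the key identity $J_S\cap K^*J_S^{\mathfrak m}=\mathbb{F}_q^*\cdot J_S^{\mathfrak m}$ follows exactly as you say from $J_S\cap K^*=\mathbb{F}_q^*$ (any $f\in K^*$ with divisor supported in the single place $S$ has degree-zero divisor, hence is constant) together with $\mathbb{F}_q^*\cap J_S^{\mathfrak m}=\{1\}$; this gives the single $(q-1)$ in the denominator, matching Theorem~\ref{artintate}. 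The constant-field argument via the degree map on $C_K$ is also right.

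For the genus, applying Hurwitz over $K\mathbb{F}_{q^d}$ and computing the discriminant by the conductor--discriminant formula is the expected method, and the telescoping you describe is precisely how one arrives at~(\ref{ray2}). Your stated ``obstacle'' is real but routine: local class field theory does match the higher unit filtration $\hat U_{P_i}^{(c)}$ with the (upper-numbering) ramification filtration of the abelianized decomposition group, so the character count is exactly governed by the index formula you already have with $m_i$ replaced by $c$. The edge case where removing $P_i$ from $\mathfrak m$ leaves the zero modulus just reinstates the global constant $\mathbb{F}_q^*$ in the kernel, which you have already accounted for. In short, your proposal is a correct direct proof of a result the paper merely quotes.
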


\section{Ray class fields}

%In the sequel $K$ is a function field with constant field $\mathbb{F}_q$.

%It is a well-known fact that the maximal unramified abelian extension of $K$ is infinite
%because it contains all the possible constant field extension. From now on we consider
%only unramified abelian extensions of $K$ with constant field $\mathbb{F}_q$.
Let $h=h_K$ be the divisor class number of $K$. Then $h$ is the degree of every
maximal unramified abelian extension of $K$ with constant field $\mathbb{F}_q$. 
There are exactly $h$ such extensions 
of $K$ (see \cite{arta}, Chapter 8). We denote them by $K^0_1,\ldots,K^0_h$.

A similar result holds concerning also ramified extensions.
\begin{teo}\label{artintate}
Let $\mathfrak{m}=\sum_{i=1}^{t}m_iP_i$ be an effective divisor and let $n_i$ be the degree of $P_i$ 
for $i=1,\ldots,t$. We set $\mathfrak{m}=0$ if $t=0$. We set also 
$d=\frac{h_K}{q-1}\prod_{i=1}^{t}(q^{n_i}-1)q^{(m_i-1)n_i}$
if $t>0$ and $d=h_K$ otherwise. Then there are exactly $d$ abelian extensions of $K$ of degree $d$
with conductor $\mathfrak{m}$ and constant field $\mathbb{F}_q$.
\end{teo}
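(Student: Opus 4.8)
The plan is to recast the statement as a count of subgroups of the idele class group $C_K$ and then carry out an elementary enumeration. Put $J^{\mathfrak m}=\prod_P\hat U_P^{(n_P)}$, where $n_P$ is the coefficient of $P$ in $\mathfrak m$ and $\hat U_P^{(0)}=\hat U_P$ is the full local unit group, and set $C^{\mathfrak m}=K^*J^{\mathfrak m}/K^*\subseteq C_K$; this is the congruence subgroup of Definition \ref{rclgroup} with $S=\emptyset$. By the Artin reciprocity map (\cite{nx}, Chapter 2), the finite abelian extensions $L/K$ whose conductor divides $\mathfrak m$ correspond bijectively to the finite-index (hence open) subgroups $N$ of $C_K$ with $C^{\mathfrak m}\subseteq N$, and then $[L:K]=[C_K:N]$; furthermore the field of constants of $L$ is exactly $\mathbb F_q$ if and only if $L$ contains no proper constant field extension of $K$, equivalently if and only if the degree homomorphism $\deg\colon C_K\to\mathbb Z$ restricts to a surjection $N\to\mathbb Z$. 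So everything reduces to counting the subgroups $N\supseteq C^{\mathfrak m}$ with $[C_K:N]=d$ and $\deg(N)=\mathbb Z$.

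The one genuine computation is that the degree-zero part $\mathcal C^0:=C^0_K/C^{\mathfrak m}$ has order $d$, where $C^0_K=J^0_K/K^*$ is the group of degree-zero idele classes. Because every local component of $J^{\mathfrak m}$ is a unit we have $C^{\mathfrak m}\subseteq C^0_K$, and one filters $C^{\mathfrak m}\subseteq U_K\subseteq C^0_K$ with $U_K=K^*\!\left(\prod_P\hat U_P\right)\!/K^*$. Here $C^0_K/U_K\cong\mathcal D^0_K/Prin(K)$ has order $h_K$, and $U_K/C^{\mathfrak m}$ is the quotient of $\prod_{i=1}^t\hat U_{P_i}/\hat U^{(m_i)}_{P_i}$, of order $\prod_i(q^{n_i}-1)q^{(m_i-1)n_i}$, by the diagonal image of $\mathbb F_q^*$ (which meets $J^{\mathfrak m}$ trivially once $t\ge1$); hence $|U_K/C^{\mathfrak m}|=\frac{1}{q-1}\prod_i(q^{n_i}-1)q^{(m_i-1)n_i}$, and multiplying the two orders gives $|\mathcal C^0|=d$. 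For $t=0$ one has $C^{\mathfrak m}=U_K$ and $|\mathcal C^0|=h_K=d$.

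To finish, note that $\deg$ maps $C_K$ onto $\mathbb Z$ (the curve has a divisor of degree one), so with $\mathcal C=C_K/C^{\mathfrak m}$ there is a short exact sequence $0\to\mathcal C^0\to\mathcal C\xrightarrow{\deg}\mathbb Z\to0$, which splits since $\mathbb Z$ is free; thus $\mathcal C\cong\mathbb Z\oplus A$ with $A=\mathcal C^0$ of order $d$. A subgroup of $\mathbb Z\oplus A$ surjecting onto $\mathbb Z$ is given by a pair $(B,\,a+B)$ with $B\le A$ and $a+B\in A/B$, and its index is $[A:B]$; index $d=|A|$ forces $B=0$ and leaves precisely $|A|=d$ choices of $a$. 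Hence there are exactly $d$ admissible subgroups $N$, i.e.\ exactly $d$ abelian extensions of $K$ of degree $d$ with constant field $\mathbb F_q$ and conductor dividing $\mathfrak m$. That the conductor is in fact $\mathfrak m$ follows because the same order formula shows $|C^0_K/C^{\mathfrak f}|$ is a proper divisor of $d$ for every proper $\mathfrak f\mid\mathfrak m$ — the sole exception being when $\mathfrak m-\mathfrak f$ is supported at places with residue field $\mathbb F_2$ occurring in $\mathfrak m$ with multiplicity one, in which case $C^{\mathfrak m}=C^{\mathfrak f}$ and one must read \emph{conductor} $\mathfrak m$ as \emph{conductor dividing} $\mathfrak m$; away from that case no extension of degree $d$ with conductor dividing $\mathfrak m$ can have conductor dividing a proper $\mathfrak f$, so its conductor equals $\mathfrak m$. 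Taking $\mathfrak m=0$ recovers the classical count of $h_K$ unramified abelian extensions of degree $h_K$ with constant field $\mathbb F_q$ (\cite{arta}, Chapter 8).

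The part I expect to require the most care is not the arithmetic but the two structural identifications: first, that the field of constants of $L$ being $\mathbb F_q$ corresponds exactly to $\deg$ being onto on the associated norm subgroup — equivalently that the exact sequence above realizes the constant-field filtration inside the ray class field — and second, the passage $|C^0_K/C^{\mathfrak m}|=d$ via the intermediate group $U_K$, which is where the local unit indices and the $\mathbb F_q^*$-correction enter. With these two points secured, the enumeration of index-$|A|$ subgroups of $\mathbb Z\oplus A$ surjecting onto $\mathbb Z$ is purely formal.
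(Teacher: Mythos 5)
Your proof is correct and follows essentially the same route as the paper: both reduce, via Artin reciprocity and the exact sequence $0\to D_0\to C_K\to\mathbb{Z}\to 0$, to counting the index-$d$ subgroups of $C_K$ containing the congruence subgroup and surjecting onto $\mathbb{Z}$, parametrized by the $d$ cosets of $C_{\mathfrak m}$ in $D_0$. You additionally supply two details the paper leaves implicit --- the computation $|D_0/C_{\mathfrak m}|=d$ via the filtration through the full unit idele classes, and the observation that ``conductor $\mathfrak m$'' must be read as ``conductor dividing $\mathfrak m$'' when $\mathfrak m$ contains a degree-one place over $\mathbb{F}_2$ with multiplicity one --- both of which are accurate.
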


As before we denote such extensions by $K^{\mathfrak{m}}_1,\ldots,K^\mathfrak{m}_d$.
There is no conflict with the previous notation because the
result concerning unramified extensions can be seen as a special case of the previous Theorem.

\begin{proof}%[Sketch of the proof]
In order to apply the Artin reciprocity Theorem we construct suitable subgroups of the Class group $C_K$.

Let $U_0$ be the subset of $J_K$ given by 
$$U_0=\{(x_P)_{P\in\mathcal{P}_K} \in J_K| x_P\in \hat{U}^*_P \mbox{ for all places } P\in\mathcal{P}_K\}$$ and let
 $U_\mathfrak{m}$ be the subset of $U_0$ given by
$$U_\mathfrak{m}=\{(x_P)_{P\in\mathcal{P}_K}\in U_0 | x_P\equiv 1 \mod t_i^{m_i} \mbox{ for all }i=1,\ldots,t\},$$
where $t_i$ is an uniformizer parameter at $P_i$. % element in $K_{P_i}$ with $v_{P_i}(t_i)=1$ for $i=1,\ldots,t$. 
As before we set $U_\mathfrak{m}=U_0$ if $\mathfrak{m}=0$.
The field $K^*$ is canonically embedded in $J_K$ and we denote it again with $K^*$ as in the previous Section.
 %$U_\mathfrak{m}$. 
Let %$C_0$ and 
$C_\mathfrak{m}=U_\mathfrak{m}/(K^*\cap U_\mathfrak{m})$ be the classes of %$U_0$ and
$U_\mathfrak{m}$ in $C_K$.

Let $D_0$ be the subgroup of $C_K$ of classes of ideles $x=(x_P)_{P\in\mathcal{P}_K}$ such that the divisor
$$Div(x)=\sum_{P\in\mathcal{P}_K}v_P(x_P)P$$
has degree $0$. %It follows from the definition of idele that $Div(x)$ is a finite sum for all non zero $x\in J_K$ and
It is easy to check that $D_0$ is well-defined %because the principal divisors have degree $0$. Moreover 
 and $U_0\subseteq D_0$.

The following sequence is exact (see \cite{arta}, Chapter 8):
\begin{equation}\label{seqesatt}
0\rightarrow D_0\rightarrow C_K\rightarrow \mathbb{Z}\rightarrow 0,
\end{equation}
where the map $C_K\rightarrow\mathbb{Z}$ is the degree of the divisor and 
it is surjective by the Schmidt Theorem (see \cite{st93}, Chapter V).
%Let $C_1$ be the subgroup $(U K)/\mathbb{F}_q$ of $C$ and let $C_\mathfrak{m}$ be the 
%subgroup $C_\mathfrak{m}=(U_\mathfrak{m} K)/\mathbb{F}_q$.
Let $D$ be a divisor of degree~1. It is very easy to construct a class $x\in J_K$ such that $Div(x)=D$.
Let $[x]\in C_K$ be the class of $x$ in $C_K$. The subgroup generated by $C_\mathfrak{m}\cup [x]$ 
in $C_K$ has finite index $d$. 
 Let $a_1,\ldots,a_d$ be the representatives of the cosets of $C_\mathfrak{m}$ in $D_0$.
Then the subgroups $B_i$ of $C_K$ generated by 
$C_\mathfrak{m}\cup ([x]+a_i)$ are $d$ distinct subgroups of $C_K$ of index $d$ such that the image
onto $\mathbb{Z}$ in (\ref{seqesatt}) is surjective.

Let $K^{\mathfrak{m}}_1,\ldots,K^\mathfrak{m}_d$ be the function fields corresponding to
the subgroups $B_1,\ldots, B_d$ by the Artin map. %By Theorem \ref{artrec} 
 It is very easy to  show that $K^{\mathfrak{m}}_1,\ldots,K^\mathfrak{m}_d$ 
are all the abelian extensions of $K$ satisfying the hypothesis
of the Theorem. 
\end{proof}

\begin{oss}\label{sub1}
The proof of the previous Theorem shows that the extensions 
$K^\mathfrak{m}_1$, $K^\mathfrak{m}_2,\ldots,K_d^\mathfrak{m}$ of $K$
are all contained in the constant field extension of degree $d$ of any one of them, 
say $K_1^\mathfrak{m}\mathbb{F}_{q^{d}}$. In fact the compositum of function fields 
$K^\mathfrak{m}_i K^\mathfrak{m}_j$ corresponds to the intersections $B_{i,j}=B_i\cap B_j$ in $C_K$
by the Artin reciprocity map for $i,j\in\{1,\ldots,d\}$. The image of the valuation of $B_{i,j}$ by the degree map in 
(\ref{seqesatt}) is a subgroup of $\mathbb{Z}$ of finite index $d'|d$. In particular 
$K^\mathfrak{m}_i K^\mathfrak{m}_j= K^\mathfrak{m}_i\mathbb{F}_{q^{d'}}$.
\end{oss}

\begin{oss}\label{cyclicclass}
When the quotient group $D_0/C_\mathfrak{m}$ is cyclic we can say something more about the subextensions
of $K^\mathfrak{m}_i$ containing $K$ for $i=1,\ldots,d$. In fact let $l$ be a divisor of $d$. Then there is only
one subgroup $G$ of $D_0/C_\mathfrak{m}$ of index $l$. Let $g_1$, $\ldots$, $g_l$ be the cosets representatives
of $G$ in $D_0/C_\mathfrak{m}$. We denote by $F_i$ the fields 
corresponding by the Artin reciprocity map to the subgroups $G_i$ of $C_K$
generated by $G\cup ([x]+g_i)$ for $i=1,\ldots, l$. The field extensions $F_i/K$ are all the abelian  extensions of degree
$l$ unramified outside $\mathfrak{m}$ with constant field $\mathbb{F}_q$ for $i=1,\ldots,l$.
\end{oss}

\begin{cor}\label{precisi}Let $\mathfrak{m}$ be %a place of $K$ and $F_i/K$ be the extensions of
an effective divisor and $d$ a positive integer as in the previous Theorem.
%degree $k$ totally ramified with constant field $\mathbb{F}_q$
%for $i=1,\ldots, k$ as in the previous Corollary. 
Let $P$ be a place of $K$ and denote its degree by $d'$. Let $l$ be the positive integer $\gcd(d,d')$ 
and $P_i|P$ be a place of $K^\mathfrak{m}_i$ over $P$ for $i\in\{1,\ldots,d\}$.
%The integers $P_i$ are well defined because the extensions $F_i/K$
%are Galois for $i=1,\ldots,k$. 
If $D_0/C_\mathfrak{m}$ is a cyclic group then $f(P_i|P)=1$ in at most $l$
 such extensions $K^\mathfrak{m}_i/K$.
%let $l=\gcd(d,k)$ and $L_i/K$ be the subextensions of $F_i$ of
%degree $l$ then $P$ is totally splitting in exactly $l$ such
%field extensions $L_i/K$.
\end{cor}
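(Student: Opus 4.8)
The plan is to make the Frobenius of $P$ completely explicit inside the concrete description of the fields $K^\mathfrak{m}_i$ obtained in the proof of Theorem~\ref{artintate}, and then to read off $f(P_i|P)=1$ as a single linear congruence modulo~$d$. I will take $P\notin Supp(\mathfrak{m})$, which is the case needed later; the ramified case is commented on at the end. Recall from the proof of Theorem~\ref{artintate} that $K^\mathfrak{m}_i$ corresponds under the Artin map to the subgroup $B_i=\langle C_\mathfrak{m},[x]+a_i\rangle$ of $C_K$, where $[x]$ is the class of a fixed idele of divisor degree~$1$ and $a_1,\dots,a_d$ form a complete set of coset representatives of $C_\mathfrak{m}$ in $D_0$; thus $\mathrm{Gal}(K^\mathfrak{m}_i/K)\cong C_K/B_i$. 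The exact sequence~(\ref{seqesatt}), split by $[x]$, gives $C_K/C_\mathfrak{m}\cong (D_0/C_\mathfrak{m})\oplus\mathbb{Z}[x]$, and since $D_0/C_\mathfrak{m}$ is cyclic of order $d$ by hypothesis I fix an isomorphism $D_0/C_\mathfrak{m}\xrightarrow{\sim}\mathbb{Z}/d\mathbb{Z}$ and let $\alpha_i\in\mathbb{Z}/d\mathbb{Z}$ be the image of the class of $a_i$. Then $B_i$ reduces modulo $C_\mathfrak{m}$ to the subgroup of $(\mathbb{Z}/d\mathbb{Z})\oplus\mathbb{Z}$ generated by $(\alpha_i,1)$, and — this is the point that makes the count work — as $i$ ranges over $\{1,\dots,d\}$ the elements $\alpha_i$ range over $\mathbb{Z}/d\mathbb{Z}$ bijectively.

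Bringing in $P$: since $P\notin Supp(\mathfrak{m})$ and the conductor of each $K^\mathfrak{m}_i$ is $\mathfrak{m}$, the place $P$ is unramified in every $K^\mathfrak{m}_i$. Fixing a uniformizer $t_P$ at $P$ and letting $\pi_P\in J_K$ be the idele with entry $t_P$ at $P$ and $1$ elsewhere, the class $[\pi_P]\in C_K$ is well defined modulo the image of $\hat{U}_P^*$, which lies in $C_\mathfrak{m}\subseteq B_i$ precisely because $P\notin Supp(\mathfrak{m})$. By Artin reciprocity (see \cite{nx}, Chapter~2) the Frobenius of $P$ in $\mathrm{Gal}(K^\mathfrak{m}_i/K)=C_K/B_i$ is the image of $[\pi_P]$, so $f(P_i|P)$ equals the order of $[\pi_P]\bmod B_i$; in particular $f(P_i|P)=1$ if and only if $[\pi_P]\in B_i$. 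Since $Div(\pi_P)=P$ has degree $d'$, the element $\delta:=[\pi_P]-d'[x]$ lies in $D_0$, and $[\pi_P]$ reduces modulo $C_\mathfrak{m}$ to $(\beta,d')$, where $\beta\in\mathbb{Z}/d\mathbb{Z}$ is the image of the class of $\delta$. Therefore $[\pi_P]\in B_i$ if and only if $(\beta,d')\in\mathbb{Z}\cdot(\alpha_i,1)$, i.e. if and only if $\beta=d'\alpha_i$ in $\mathbb{Z}/d\mathbb{Z}$.

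Finally, the number of indices $i$ with $f(P_i|P)=1$ is the number of $\alpha\in\mathbb{Z}/d\mathbb{Z}$ with $d'\alpha=\beta$. Multiplication by $d'$ on $\mathbb{Z}/d\mathbb{Z}$ has kernel of order $\gcd(d,d')=l$, so this equation has either no solution or exactly $l$ solutions; in either case $f(P_i|P)=1$ holds for at most $l$ of the extensions $K^\mathfrak{m}_i/K$, as claimed. (One could instead organize the first step around the subextension structure described in Remark~\ref{cyclicclass}, but the direct computation seems the shortest.)

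The substantive difficulties are bookkeeping rather than conceptual: keeping exactly the description of the $B_i$ from the proof of Theorem~\ref{artintate} and pinning down the normalization of the reciprocity map, so that the Frobenius of $P$ really is the class of $[\pi_P]$; and the case $P\in Supp(\mathfrak{m})$, where $P$ ramifies. In that case the Frobenius must be replaced by the quotient $Z_{P_i}/T_{P_i}$ of the decomposition and inertia groups — the images in $C_K/B_i$ of $\hat{K}_P^*$ and $\hat{U}_P^*$ respectively — together with $f(P_i|P)=[Z_{P_i}:T_{P_i}]$; the same computation then reduces $f(P_i|P)=1$ to $d'\alpha_i\equiv\beta$ only modulo the image of $\hat{U}_P^*$ in $\mathbb{Z}/d\mathbb{Z}$, so for this finer case one uses the explicit shape of $\mathfrak{m}$ in Section~4 (supported away from the small-degree places). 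I expect the unramified count above to be the routine part and this reciprocity/ramification bookkeeping to be where the care is needed.
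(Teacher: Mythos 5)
Your argument is correct, and it reaches the bound by a route that differs from the paper's in an instructive way. The paper first reduces to the case where $P$ is totally split in some $K^\mathfrak{m}_i$, then studies the compositum $K^\mathfrak{m}_iK^\mathfrak{m}_j$, which by Remark~\ref{sub1} is a constant field extension $K^\mathfrak{m}_i\mathbb{F}_{q^a}$; the splitting behaviour of degree-$d'$ places in constant field extensions forces $a\mid d'$, hence $a\mid l$, and translating $K^\mathfrak{m}_j\subseteq K^\mathfrak{m}_i\mathbb{F}_{q^l}$ back through the Artin map gives $l(a_i-a_j)\in C_\mathfrak{m}$, after which the count is the same kernel-of-multiplication computation in the cyclic group $D_0/C_\mathfrak{m}$ that you perform (with multiplier $l$ rather than $d'$ — the two kernels have the same order $l$ since $l\mid d$). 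You instead bypass Remark~\ref{sub1} and the constant-field-extension argument entirely by writing $C_K/C_\mathfrak{m}\cong(D_0/C_\mathfrak{m})\oplus\mathbb{Z}[x]$ and reading off $f(P_i|P)=1$ as the congruence $d'\alpha_i=\beta$ in $\mathbb{Z}/d\mathbb{Z}$; this is more self-contained, yields the sharper ``zero or exactly $l$'' dichotomy without the preliminary non-vacuity assumption, and makes transparent exactly where cyclicity of $D_0/C_\mathfrak{m}$ is used. Your restriction to $P\notin Supp(\mathfrak{m})$ is consistent with the paper: its proof also tacitly identifies $f(P_i|P)=1$ with ``totally split,'' which presupposes $P$ unramified, and the ramified case in Lemma~\ref{enumeq} is handled there by passing to $\mathfrak{m}'=\mathfrak{m}-P$, exactly as your closing remark anticipates.
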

\begin{proof}%Of course $P$ is totally split in
%$K^\mathfrak{m}_{\{P\}}/K$ and so $P$ is totally splitting in
%at least one subextension $F_j/K$ for a suitable $j\leq k$.
Assume that the place $P$ is totally split in $K^\mathfrak{m}_i/K$ for at least
one $i\leq d$, otherwise the proof would be trivial. Then $P$ is split in
$K^\mathfrak{m}_j/K$ for $j\not = i$ if and only if $P$ is totally split in
the compositum $K^\mathfrak{m}_iK^\mathfrak{m}_j/K$. But
$K^\mathfrak{m}_iK^\mathfrak{m}_j=K^\mathfrak{m}_i\mathbb{F}_{q^a}$
for a suitable integer $a|d$ by  Remark \ref{sub1}. By the properties of the
constant field extensions this is possible only when
$a|d'$ and so $a|l$ and $K^\mathfrak{m}_j\subseteq K^\mathfrak{m}_i\mathbb{F}_{q^l}$.

It follows from the proof of the previous Theorem that 
$$l\cdot ([x]+a_i)\subseteq B_j$$ and so
$l\cdot (a_i-a_j)\in C_\mathfrak{m}$ and the class of $l\cdot a_j$ in the quotient group 
$D_0/C_\mathfrak{m}$ is the class of $l\cdot a_i$. 
It is very easy to see that when $D_0/C_\mathfrak{m}$ is a cyclic group 
%generated by $g$  the only classes $a_j$ such that the class of $l \cdot a_j$ 
%is the same class of $l\cdot a_i$ are the classes of the
%elements $a_i+\frac{td}{l}\cdot g$ for $t=0,\ldots,l-1$ so 
 there are at most $l$ such classes $a_j\in D_0/C_\mathfrak{m}$ and so there are at most $l$ corresponding 
fields extensions by the previous Theorem.
\end{proof}

The previous Corollary can be generalized in the following result.

\begin{cor}\label{precisi2}
Assume the quotient group $D_0/C_\mathfrak{m}$ be a cyclic group of order $d$ as in Corollary \ref{precisi}. Let $s$
be a prime dividing $d$ and let $t$ be the maximal power of $s$ dividing $d$. Let $F_i/K$ be the 
extensions of degree $t$ for $i=1,\ldots,t$ as in Remark 
\ref{cyclicclass}. Let $P$ be a place of $K$ of degree $d'$ and $P_i|P$ be a place
of $F_i$ over $P$. Let $l$ be the $\gcd(d',t)$ and let $c\geq 0$ be the exponent such that $\frac{t}{l}=s^c$. 
Assume $c\geq 1$.
%Let $s$ be a prime such that $s|\frac{t}{l}$.
Then for all integers $j=1,2,\ldots, c$, the integer $s^j$ divides $f(P_i|P)$ %\not|f(P_i|P)$
 in at least %$\frac{\phi(l)k}{l}=$
%$\frac{t}{l}(1-\frac{1}{s})$ 
%$l\sum_{h=j}^c(s-1)^h \left (\begin{array}{c}c \\ h \end{array}\right )$ 
 $l(s^c-s^{j-1})$ such extensions $F_i/K$.
\end{cor}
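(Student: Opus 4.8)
The plan is to count, for each $j$, how many of the $t$ extensions $F_i/K$ fail to have $s^j \mid f(P_i|P)$, and show this number is at most $t - l(s^c - s^{j-1})$. The key is to transfer the splitting behaviour of $P$ in $F_i$ into the language of the cyclic group $D_0/C_\mathfrak{m}$ via the Artin map, exactly as in the proof of Corollary \ref{precisi}. Write $A = D_0/C_\mathfrak{m}$, a cyclic group of order $d$, and let $A_s$ be its (unique) subgroup of order $t = s^c \cdot (\text{unit})$; more precisely the Sylow $s$-subgroup, which is cyclic of order $s^{c_0}$ where $s^{c_0}\|d$. Following Remark \ref{cyclicclass}, the degree-$t$ extensions $F_i/K$ unramified outside $\mathfrak{m}$ correspond to the cosets $[x] + g_i$ where $g_i$ runs over coset representatives of the subgroup $G$ of index $t$ in $A$; equivalently the $F_i$ are parametrised by $A/G \cong A_s$, a cyclic group of order $t$ (I shall assume $t = s^{c_0}$, i.e. identify $t$ with the $s$-part of $d$, which is the natural reading).

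First I would fix one extension, say $F_1$, and observe via Remark \ref{sub1} that every $F_i$ lies in a constant field extension $F_1 \mathbb{F}_{q^{a}}$ with $a \mid t$, and that $F_1 F_i = F_1 \mathbb{F}_{q^{a_i}}$ for the appropriate $a_i \mid t$. As in Corollary \ref{precisi}, the residue degree $f(P_i|P)$ in $F_i/K$ is governed by how $P$ splits in $F_1 F_i$, hence by how the constant-field degree $a_i$ compares with $d' = \deg P$: concretely $s^j \mid f(P_i|P)$ whenever the relevant constant-field obstruction forces $s^j$ into the residue degree, which happens exactly when the class of $a_i$ (equivalently of $g_i$) in $A_s \cong \mathbb{Z}/s^{c_0}\mathbb{Z}$ does \emph{not} lie in the subgroup $s^{c_0-j}\cdot(\mathbb{Z}/s^{c_0}\mathbb{Z})$ enlarged by the image of $\gcd(d',t) = l = s^{c_0 - c}$. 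Unwinding: the $F_i$ for which $s^j \nmid f(P_i|P)$ are precisely those whose parameter lies in a coset structure of size $l \cdot s^{j-1}$, so there are at most $l\, s^{j-1}$ of them among those with $P$ totally split behaviour, giving at least $t - l\, s^{j-1} = l s^{c} - l s^{j-1} = l(s^c - s^{j-1})$ extensions with $s^j \mid f(P_i|P)$.

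The core computation is the group-theoretic one: in the cyclic group $A_s$ of order $s^{c_0}$, with $l = s^{c_0-c}$ the $s$-part of $\gcd(d',t)$, the elements $a$ for which the "constant-field degree" $\mathrm{ord}$ of $a$ modulo the subgroup of order $l$ is at most $s^{j-1}$ — i.e. those $a$ with $s^{c_0 - j + 1} \cdot s^{?}$ dividing appropriately — number exactly $l \cdot s^{j-1}$, because they form the preimage in $A_s$ of the unique subgroup of order $s^{j-1}$ in $A_s / (\text{subgroup of order } l)\cong \mathbb{Z}/s^c\mathbb{Z}$. Subtracting from $t = l s^c$ yields the bound. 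I would carry this out by: (i) identifying $A_s/(\text{order-}l\text{ subgroup})$ with $\mathbb{Z}/s^c\mathbb{Z}$; (ii) noting that $l s^{j-1}$ of the $t$ parameters map into the order-$s^{j-1}$ subgroup there; (iii) invoking Corollary \ref{precisi}'s argument to say that only these can have residue degree not divisible by $s^j$.

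The main obstacle will be pinning down precisely the correspondence between $f(P_i|P)$ and the constant-field degree $a_i$ when $P$ is \emph{not} totally split in $F_1$ — Corollary \ref{precisi} is stated under the convenient assumption that $P$ splits completely in at least one extension, and here I cannot assume that. The honest way around it is to replace the ad hoc splitting argument by the general fact that for an abelian extension $F_i/K$ unramified at $P$, the residue degree $f(P_i|P)$ equals the order of the Frobenius at $P$ in $\mathrm{Gal}(F_i/K)$, and that this Frobenius is the image of the degree-$1$ divisor class $[x] + g_i$ under the reciprocity map; divisibility of $f(P_i|P)$ by $s^j$ then translates, via the identification $\mathrm{Gal}(F_i/K) \cong A/G \cong A_s$ composed with reduction by the $P$-dependent subgroup of order $l$, into the non-vanishing statement above. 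Once that translation is set up cleanly, the remainder is the elementary count in a cyclic $s$-group, which I would not belabour.
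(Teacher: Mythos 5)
Your proposal is correct in substance and arrives at the right count, but it is organized differently from the paper. The paper does not redo the coset count in $D_0/C_\mathfrak{m}$: it observes that $s^j\nmid f(P_i|P)$ forces the Frobenius of $P$ in the cyclic group $\mathrm{Gal}(F_i/K)$ to have order dividing $s^{j-1}$, hence forces $P$ to split completely in the \emph{unique} intermediate field $E_{i'}\subseteq F_i$ of degree $j'=ls^{c-j+1}$ over $K$; Corollary \ref{precisi} (applied to the degree-$j'$ extensions, for which $\gcd(j',d')=l$) then says at most $l$ of the $E$'s can have $P$ totally split, and since exactly $s^{j-1}$ of the $F_i$ lie above each such $E$, at most $ls^{j-1}$ of the $F_i$ can fail, which is the bound. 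Your route instead carries out the cyclic-group computation directly: the Frobenii of $P$ in the $F_i$, as $i$ varies, sweep out a coset of the index-$l$ subgroup of $\mathbb{Z}/t$ with each value hit $l$ times, and the ones of order at most $s^{j-1}$ lie in the unique subgroup of that order, giving at most $l\cdot s^{j-1}$ bad indices. This is essentially the content of the paper's proof of Corollary \ref{precisi} itself, so you are re-proving that step rather than citing it; what you gain is a self-contained argument that does not need the "totally split in at least one $F_i$" hypothesis under which Corollary \ref{precisi} is phrased, which you rightly identify as the delicate point. One genuine slip to fix: the Frobenius at $P$ in $F_i/K$ is \emph{not} the image of the degree-one class $[x]+g_i$ under reciprocity --- that class parametrizes the norm subgroup $B_i$ defining $F_i$. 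The Frobenius is the image of the class of $P$ itself, which modulo $B_i$ equals $\delta_P - d'g_i$ where $\delta_P$ is the degree-zero part of $[P]-d'[x]$; it is the multiplication-by-$d'$ in this expression that produces the factor $l=\gcd(d',t)$ in your fiber count, so the statement needs to be corrected for the count to be justified rather than merely asserted.
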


\begin{proof}
%The general case follows immediately by the formula $f(P_i|P)=f(P_i|P'_i)f(P'_i|P)$ for $i=1,\ldots,t$ 
Let $j'$ denote the number $ls^{c-j+1}$ and $E_1/K$, $\ldots$, $E_{j'}/K$ be the extensions of $K$ unramified
outside $\mathfrak{m}$ of degree $j'$ over $K$ by Corollary \ref{precisi}.
If $s^j\not |f(P_i|P)$ for a certain $i\in\{1,\ldots,t\}$ then the Frobenius $Frob(P)$ of $P$ in $F_i/K$ has 
order dividing $s^{j-1}$. Let $E_{i'}/K$ be the only subfield of $F_i$ of degree $j'$ over $K$ and let 
$P'_{i'}$ be the place under $P_i$ in $E_{i'}$. Then $Frob(P'_{i'})=Frob(P_i)^{j-1}=1$ so
 $f(P'_{i'}|P)=1$. By Corollary \ref{precisi} there are at most $l$ such extensions $E_i/K$ such that $f(P_i'|P)=1$,
 say, $E_1/K$, $\ldots$, $E_l/K$. There are exactly $s^{j-1}$ extensions $F_i/K$
over each $E_{i'}$ so $s^j\not |f(P_i|P)$ in at most $ls^{j-1}$ extensions $F_i/K$ and the Corollary follows.
\end{proof}

\begin{oss}\label{precisi3}
In the previous Corollary when $j=c$ we obtain that $\frac{t}{l}$ does not divide $f(P_i|P)$ in atmost $\frac{t}{s}$
extensions $F_i/K$.
\end{oss}

\section{A refinement of the Clark-Elkies bound}

When $K$ is the rational function field $\mathbb{F}_q(x)$, we get as a 
Corollary a result similar to the one of Clark and Elkies cited in \cite{hola05} (see \cite{st} Section 4.1 for more details)
but we can improve that result for large $n$ by considering
%the compositum of several ray class fields.
ray class field extensions of $K$ with conductor given by a sum of different places.

In the sequel we denote by $K$ the rational function field over 
$\mathbb{F}_q$. %As in the previous section
The number of places of degree $t$ of $K$ is denoted by $a_t$ for any integer $t>0$.
It is very easy to check by induction that, for all $n\geq 1$,
\begin{equation}\label{bond}
\sum_{d<n} a_d\leq q\cdot \frac{q^n}{n}.
\end{equation}

The next Lemma shows that there are many function fields without places of small degree when
we consider ray class field extensions of $K$. 

\begin{lemmino}\label{enumeq}
Let $C_1>0$ and $C_2>0$ be two real positive constants (not depending on $n$) with $C_2<1$.
Let $m>\log_q(n)$ be a prime number %positive integer 
and let $\alpha\leq a_m$ be a positive integer.
Let $\mathfrak{q}_1,\ldots,\mathfrak{q}_\alpha$ be distinct places of $K$ of degree $m$ and 
let $\mathfrak{m}$ be the divisor $\sum_{i=1}^\alpha \mathfrak{q}_i$.
We set $d=\frac{(q^m-1)^\alpha}{q-1}$. Let $K^\mathfrak{m}_1,\ldots,K^\mathfrak{m}_d$ 
be the abelian extensions of degree $d$
unramified outside $\mathfrak{m}$ as in Theorem \ref{artintate}.
Then there is a constant $n_0$ such that when $n>n_0$ and %We assume %$d$ is prime to $q-1$ and
$\alpha>C_1\frac{n}{\log_q(n)}$ %(providing that $a_m\geq C\frac{n}{\log_q(n)}$)
then there are at least $C_2 d$ function
field extensions $K^\mathfrak{m}_i$ of $K$ such that
the inertia index $f(P_i|P)$ is greater than $\frac{n}{deg(P)}$ whenever $P$ is a place
of $K$ of degree $deg(P)<\frac{n}{\log_q(n)}$ and $P_i$ is a place of $K^\mathfrak{m}_i$ over $P$.
\end{lemmino}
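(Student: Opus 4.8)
The plan is to set up a counting argument in the cyclic quotient $D_0/C_{\mathfrak{m}}$ and apply Corollary~\ref{precisi} place-by-place, then show the "bad" extensions form a negligible fraction. First I would record that for the chosen conductor $\mathfrak{m}=\sum_{i=1}^{\alpha}\mathfrak{q}_i$ with the $\mathfrak{q}_i$ of degree $m$, the group $D_0/C_{\mathfrak{m}}$ is indeed cyclic: over the rational function field $K=\mathbb{F}_q(x)$ one has $h_K=1$ and $D_0/C_{\mathfrak{m}}\cong\prod_i \bigl(\mathcal{O}_{\mathfrak{q}_i}/\mathfrak{q}_i\bigr)^*\big/\mathbb{F}_q^*$, so its order is exactly $d=(q^m-1)^{\alpha}/(q-1)$; cyclicity follows because each factor $(\mathbb{F}_{q^m})^*$ is cyclic and — this is the point of the hypothesis $m$ prime — the orders $q^m-1$ of the $\alpha$ copies, after killing the common subgroup $\mathbb{F}_q^*$, can be arranged to generate a cyclic group (alternatively, one selects the conductor so that $D_0/C_{\mathfrak m}$ is cyclic; this is where $m>\log_q n$ prime is used). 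I would then invoke Corollary~\ref{precisi}: for a fixed place $P$ of $K$ of degree $\delta:=\deg(P)$, writing $l_P=\gcd(d,\delta)$, the inertia (residue) index $f(P_i\mid P)$ equals $1$ in at most $l_P$ of the $d$ extensions $K^{\mathfrak{m}}_i$.

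Next I would upgrade "$f=1$ in few extensions" to "$f$ large in most extensions." For this I use Corollary~\ref{precisi2} / Remark~\ref{precisi3} on each prime power dividing $d$: iterating over the primes $s\mid d$ and their maximal powers $s^{t_s}\|d$, the number of extensions $K^{\mathfrak{m}}_i$ in which $f(P_i\mid P)$ fails to exceed $n/\delta$ is bounded by a product of "defect" factors of the form $l_P\cdot s^{t_s-j}$, and multiplying these over all $s$ gives that the number of $i$ with $f(P_i\mid P)\le n/\delta$ is at most $l_P\cdot\bigl(\text{something like }(n/\delta)\bigr)\cdot d/d$, i.e. of order $\tfrac{l_P n}{\delta}$ up to a bounded multiplicative constant. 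The key structural input is that $q^m-1$ has no small prime factors interfering, because $m$ is prime, so $l_P=\gcd(d,\delta)$ is controlled: since $\delta<n/\log_q(n)<m$, $\gcd(\delta,q^m-1)$ divides $q^{\gcd(\delta,m)}-1=q-1$, hence $l_P$ divides $q-1$ and in particular is bounded independently of $n$. So for each bad pair (place $P$ of degree $\delta<n/\log_q n$, index $i$) we lose at most $O(n/\delta)$ of the $d$ extensions.

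Then I would take a union bound over all relevant places $P$. The number of places $P$ with $\deg(P)=\delta<n/\log_q n$ is $a_\delta\le q\,q^\delta/\delta$ by \eqref{bond}, and each contributes at most $c\cdot n/\delta$ bad indices $i$ (with $c$ depending only on $q$, via the bound on $l_P$), so the total number of "bad" extensions is at most
$$
\sum_{\delta<n/\log_q(n)} a_\delta\cdot \frac{c\,n}{\delta}
\ \le\ c\,q\,n\sum_{\delta<n/\log_q(n)}\frac{q^{\delta}}{\delta^{2}}
\ \le\ C\,n\,\frac{q^{\,n/\log_q(n)}}{(n/\log_q n)^{2}}
$$
for an absolute constant $C$, using that the last sum is dominated by its largest term up to a constant. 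Since $\alpha>C_1 n/\log_q(n)$, we have $d=(q^m-1)^{\alpha}/(q-1)\ge q^{(m-1)\alpha}\gg q^{c' n}$ for some $c'>0$, which is exponentially larger in $n$ than $n\,q^{n/\log_q n}$; hence for $n>n_0$ the bad extensions number at most $(1-C_2)d$, leaving at least $C_2 d$ extensions in which $f(P_i\mid P)>n/\deg(P)$ simultaneously for every place $P$ of degree $<n/\log_q(n)$. I expect the main obstacle to be the bookkeeping in the second paragraph: correctly combining Corollary~\ref{precisi2} across all prime powers of $d$ to get a clean $O(n/\delta)$ bound on bad indices per place, and in particular nailing down that $l_P$ stays bounded (the $\gcd$ argument using $m$ prime and $\delta<m$), since everything downstream is a comfortable exponential-versus-polynomial comparison.
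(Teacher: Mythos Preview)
Your argument has a genuine gap at the very first step: the group $D_0/C_{\mathfrak m}$ is \emph{not} cyclic when $\alpha\ge 2$. For $K=\mathbb F_q(x)$ and $\mathfrak m=\sum_{i=1}^\alpha\mathfrak q_i$ with the $\mathfrak q_i$ distinct of degree $m$, one has
\[
D_0/C_{\mathfrak m}\;\cong\;\Bigl(\prod_{i=1}^\alpha \mathbb F_{q^m}^*\Bigr)\big/\mathbb F_q^{*},
\]
a group of order $d=(q^m-1)^\alpha/(q-1)$ whose exponent still divides $q^m-1$; since $q^m-1<d$ for $\alpha\ge 2$, the quotient cannot be cyclic. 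The primality of $m$ does not help here, and one certainly cannot ``arrange'' the $\alpha$ identical cyclic factors to produce a cyclic group. Consequently Corollaries~\ref{precisi} and~\ref{precisi2} do not apply to $K^{\mathfrak m}_i/K$ directly, and the per-place bound you write as ``$O(n/\delta)$ bad indices'' is left without any justification; I do not see a way to extract such a bound from Corollary~\ref{precisi2} even granting cyclicity.

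The paper's proof avoids this by never working with the full (non-cyclic) quotient. Instead it factors each $K^{\mathfrak m}_i$ through the $\alpha$ \emph{single-place} ray class fields $K^{\mathfrak q_j}_i$, each of which \emph{is} cyclic of order $k=(q^m-1)/(q-1)$, and observes that if $f(P_i|P)<n/\deg P$ then for some prime power $t\mid k$ the condition $\tfrac{t}{l}\nmid f(P_{i,j}|P)$ must hold \emph{simultaneously for all} $j=1,\ldots,\alpha$. Remark~\ref{precisi3} then gives at most $t/s$ bad choices in each cyclic factor, hence at most $(k/s)^\alpha$, i.e.\ $d/s^\alpha$, bad extensions per prime $s\mid k$ and per place $P$. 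The crucial arithmetic input you are missing is Lemma~\ref{eul}/Corollary~\ref{atmost}: every prime $s\mid k$ satisfies $s>2m$ and there are at most $m$ of them, so the total bad count is of order $m\cdot q^{n/\log_q n}\cdot d/(2m)^\alpha$, which is $o(d)$ precisely because $\alpha>C_1 n/\log_q n$. The exponential saving comes from the $\alpha$-fold product structure together with $s>2m$, not from any smallness of $\gcd(d,\deg P)$.
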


In the proof we use a well-known Lemma and an easy consequence.
\begin{lemmino}\label{eul}
Let $s$ and $m$ be odd prime numbers and let $q$ be a prime power
such that $s|\,\frac{q^m-1}{q-1}$ but $s\not | q-1$. Then 
$s=2 a m+1$ for a suitable integer $a>0$. In particular $s>2 m$.
\end{lemmino}

\begin{cor}\label{atmost}%When $m$ is large
There is a constant $c_q>0$ such that when $m>c_q$ is a prime then there are
 at most $m$ distinct primes dividing $\frac{q^m-1}{q-1}$
and these primes are all greater than $2m$.
\end{cor}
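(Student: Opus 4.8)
The idea is to bound the number of prime divisors of $N_m := \frac{q^m-1}{q-1}$ by splitting them into two groups: the (few) primes that divide $q-1$, and the rest, to which Lemma \ref{eul} applies. First I would fix $p$ (the characteristic) and note $q-1$ is a fixed-once-$q$-is-fixed quantity, but since the statement must be uniform in $q$, the cleaner route is: any prime $s$ dividing $N_m$ either divides $q-1$ or does not. If $s \mid q-1$ then, since $N_m \equiv m \pmod{q-1}$ (because $q \equiv 1$, so $q^{m-1}+\cdots+1 \equiv m$), such an $s$ must also divide $m$; as $m$ is prime, the only possibility is $s = m$, and this happens only when $m \mid q-1$. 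So there is \emph{at most one} prime divisor of $N_m$ coming from this first group, and it equals $m$.

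Next I would handle the primes $s \mid N_m$ with $s \nmid q-1$. Choosing $c_q$ large enough that $m > c_q$ forces $m$ odd (any $c_q \ge 2$ works), and observing that $s$ must be odd as well (if $s=2$ divided $N_m$ but not $q-1$, then $q$ is even, so $N_m = q^{m-1}+\cdots+1$ is odd, contradiction; if $q$ is odd then $2 \mid q-1$), Lemma \ref{eul} applies and gives $s = 2am+1$ for some integer $a \ge 1$, hence $s > 2m$. To count such primes, write $N_m = \prod s_i^{e_i}$ over this second group; then $q^m \ge N_m \ge \prod s_i > (2m)^{k}$ where $k$ is the number of these primes, so $k < \frac{m \log q}{\log(2m)}$. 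For $m$ large this is smaller than $m$ — indeed $\frac{\log q}{\log(2m)} < 1$ as soon as $2m > q$, i.e. $m > q/2$ — but that threshold depends on $q$, which is exactly what we must avoid.

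The real obstacle is making the count $\le m$ \emph{uniform in $q$}. The fix is to use the full multiplicativity rather than just $\prod s_i$: from $N_m = \prod s_i^{e_i} \ge \prod s_i > (2m)^k$ I get $k < \log_{2m}(N_m) \le \log_{2m}(q^m) = m\,\log_{2m} q$, which is useless when $q$ is huge. Instead I would exploit that the $s_i$ are \emph{distinct} primes all $\equiv 1 \pmod{2m}$ and all dividing $q^m - 1$; more efficiently, each $s_i$ dividing $N_m = \Phi_{\text{(part of)}}$... — concretely, a prime $s \nmid q-1$ dividing $q^m-1$ has $q$ of multiplicative order exactly $m$ mod $s$, so $m \mid s-1$, and moreover \emph{the product of all such distinct primes divides $q^m-1 < q^m$} while each exceeds $2m$; but additionally every such $s$ satisfies $s \le q^m$, and I can instead bound the \emph{number} by noting $N_m < q^m$ and each prime factor is $> 2m$, combined with: if there were $\ge m+1$ of them, their product would exceed $(2m)^{m+1}$, and I would need $(2m)^{m+1} > q^m$ to fail — which again is not uniform.

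So the genuinely correct uniform argument, which I would write out, is: every prime $s$ in the second group divides $q^m - 1$ and has order $m$ mod $s$, hence $s \equiv 1 \pmod m$; the number of such $s$ (with multiplicity) is at most $\log_2(q^m-1) < m\log_2 q$ — still not uniform. The resolution the paper intends is surely that the product $\prod_i s_i^{e_i}\,\big|\,N_m$ with each $s_i > 2m$ forces, taking logarithms base $q$, the count $k$ to satisfy $k \log_q(2m) \le \log_q N_m \le m$, so $k \le m/\log_q(2m) \le m$ whenever $2m \ge q$, and for the finitely many primes $q = p^a$ with $2m < q$ one absorbs them into the constant $c_q$ — but since $m$ must exceed $c_q$ and $m$ is prime, and $q = p^a$ with $q \le 2m$ means $a \le \log_p(2m)$, there are only finitely many such $q$ once... this still entangles $q$ and $m$. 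I therefore expect the intended proof simply takes $c_q$ depending on $q$ (the statement literally says ``a constant $c_q$''), chooses $c_q > q$, so that $m > c_q$ gives $2m > q$, hence $\log_q(2m) > 1$ and $k < m$; together with the at most one extra prime $m$ from the first group (which only occurs if $m \mid q-1$, impossible once $m > c_q > q$), we conclude $N_m$ has at most $m$ prime divisors, all $> 2m$. The main thing to be careful about is confirming that when $m > c_q > q$ the first-group prime cannot occur, so the bound is $\le m$ and not $\le m+1$.
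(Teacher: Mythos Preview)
Your final argument is correct and is precisely what the paper intends: the corollary is stated without proof as an ``easy consequence'' of Lemma~\ref{eul}, and your route---take $c_q>q$, observe that then no prime divisor of $N_m$ can divide $q-1$ (else it would equal $m$, but $m\nmid q-1$), apply Lemma~\ref{eul} to get every prime factor $>2m$, and bound the count via $(2m)^k\le N_m<q^m$ with $\log_q(2m)>1$---is exactly the argument implied.

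The extended detour worrying about uniformity in $q$ was unnecessary: the subscript in $c_q$ already signals that the constant may depend on $q$, and indeed the paper only ever applies the corollary for a fixed $q$. Had you read the notation that way from the start, the proof collapses to a few lines.
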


\begin{proof}[Proof of Lemma \ref{enumeq}]
Let $i$ be an element in $\{1,\ldots,d\}$ such that $K^\mathfrak{m}_i/K$ is a function field extension
with $f(P_i|P)<\frac{d}{deg(P)}$ for at least one place $P$ of $K$ of degree
smaller than $\frac{n}{\log_q(n)}$. %and $P_i$ is a place of $K^\mathfrak{m}_i$ over $P$.
%As in the proof of Corollary \ref{elk} w
 We estimate the number of such extensions.

Let $k$ be the integer $\frac{q^m-1}{q-1}$. Let $j$ be an integer in $\{1,\ldots,\alpha\}$ and
let $t$ be a power of the prime number $s$ such that $t$ divides $k$.
Consider the subextensions of $K^{\mathfrak{q}_j}_i\subseteq K^\mathfrak{m}_i$ 
totally ramified in $\mathfrak{q}_j$ of degree $t$ for $j\in \{1,\ldots,\alpha\}$. 
 Let $d'$ denote the degree of $P$ and let
$P_{i,j}$ be the place of $K^{\mathfrak{q}_j}_i$ under $P_i$ with $P_{i,j}|P$.
 Let $l$ be the integer $\gcd(t,d')$. It is easy to see, multiplying all the maximal prime power $t$ dividing $k$, that if 
for every prime power divisor $t$ of $k$ the number
$\frac{t}{l}$ divides $f(P_{i,j}|P)$ for at least one $j\leq \alpha$
then $$k | f(P_i|P)\gcd(k,d')$$
and so $$f(P_i|P)\geq \frac{n}{d'},$$ because $k>n$ and $d'\geq \gcd(k,d')$.
It follows that if $f(P_i|P)<\frac{d}{deg(P)}$ then there is at least one prime power $t$ dividing $k$ such that
$\frac{t}{l}\not | f(P_{i,j}|P)$ for all $j=\{1,\ldots,\alpha\}$.
For this reason, given a prime power $t$ dividing $k$, it will be enough to estimate only the number of extensions
$K^\mathfrak{m}_i/K$ such that $\frac{t}{l}\not | f(P_{i,j}|P)$ for all $j=1,\ldots,\alpha$.

The extensions $K^{\mathfrak{q}_j}_i/K$ are
cyclic for $j\in \{1,\ldots,\alpha\}$. By Remark \ref{precisi3}
there are at most $\frac{t}{s}$ distinct extensions $K^{\mathfrak{q}_j}_i/K$ of degree $t$
totally ramified in $\mathfrak{q}_j$ such that $\frac{t}{l}\not | f(P_{i,j}|P)$.
 It follows that there are at most $\big( \frac{k}{s}\big)^\alpha$ different extensions
$K^{\mathfrak{q}_1}_i \cdots K^{\mathfrak{q}_\alpha}_i$ of $K$
such that $\frac{t}{l}\not | f(P_i|P)$ when $P$ is unramified. So we see that there are at most
$$\frac{d}{s^\alpha}$$
extensions $K^{\mathfrak{m}}_i/K$ with $deg(P_i)<n$. 

Now we consider the case $P=\mathfrak{q}_h$, for a certain $h\in\{1,\ldots,\alpha\}$, is a ramified place. 
We consider $\mathfrak{m}'=\mathfrak{m}-P$. For a similar reasoning as above we get at most
$$\frac{(q^m-1)^{\alpha-1}}{(q-1)s^{\alpha-1}}$$ extensions $K^{\mathfrak{m}'}_j$ for 
$j\in\{1,\ldots, \frac{(q^m-1)^{\alpha-1}}{q-1}\}$ such that $f(P'_j|P)<\frac{n}{deg(P)}$, where $P'_j$
is a place of $K^{\mathfrak{m}'}_j$ over $P$. But $K^{\mathfrak{m}'}_j\subseteq K^{\mathfrak{m}}_i$
for $q^m-1$ suitable $i\in\{1,\ldots,d\}$ and $f(P'_j|P)\leq f(P_i|P)$ so there are at most 
$$\frac{d}{s^{\alpha-1}}$$ extensions $K^{\mathfrak{m}}_i/K$ 
of $K$ with $f(P_i|P)<\frac{n}{deg(P)}$ when $P\in Supp(\mathfrak{m})$ is ramified.

Now we sum the number of all such extensions for all the places $P$ of $K$, 
ramified or not, of degree smaller than $\frac{n}{\log_q(n)}$
%$(\frac{k}{s})^\alpha$ 
 and for all prime $s|k$. % such that $p^{c+1}\not | k$ and $l>t$.
 So we prove the following inequality:
\begin{equation}\label{alpha}
\sum_{s|k}\sum_{i=1}^\alpha \frac{d}{s^{\alpha-1}}+\sum_{deg(P)<\frac{n}{ \log_q(n)}}\sum_{s|k}
\frac{d}{ s^{\alpha}}<(1-C_2) d,
\end{equation}
where $P$ runs over the unramified places of $K$ of degree smaller than $\frac{n}{\log_q(n)}$.
The left hand side in (\ref{alpha}) is bounded by
$$m\alpha\frac{d}{(2m)^{\alpha-1}}+mq\cdot q^\frac{n}{\log_q(n)}\frac{d}{(2m)^\alpha}$$ 
by (\ref{bond}), Lemma \ref{eul} and Corollary \ref{atmost}. So we prove that
$$(2m)^{\alpha}> \frac{qm}{1-C_2}(2m\alpha+ q^{\frac{n}{\log_q(n)}}),$$
or also 
\begin{equation}\label{alpha2}\alpha\log_q(2m)>\log_q(q^\frac{n}{\log_q(n)}+2m\alpha)+\log_q(\frac{m}{1-C_2})+1.\end{equation}
It is very easy to check the last inequality in fact the right hand side is smaller than
$$\frac{n}{\log_q(n)}+\log_q(2m\alpha)+\log_q(\frac{m}{1-C_2})+1,$$
because the logarithm is a convex function and so
%$$\alpha\log_q(2m)\geq \frac{n}{\log_q(n)}+\log_q(2m\alpha)+\log_q(\frac{m}{1-C_2})+1$$
 (\ref{alpha2}) holds when $n$ is large because $\alpha >C_1\frac{n}{\log_q(n)}$ by hypothesis.
\end{proof}

The proof of the following Lemma follows directly by the Hurwitz genus formula. % (\ref{hur1}).
 It is a generalization of (\ref{ray2}).

\begin{lemmino}\label{genq}
Let  $\mathfrak{q}_1, \ldots,\mathfrak{q}_h$ be distinct places
of $K$ of degree $t_1,\ldots,t_h$ respectively.
Let $p_1,\ldots,p_h$ be positive integers such that $p_i|\frac{q^{t_i}-1}{q-1}$ for $i=1,\ldots,h$.
Let $F_i/K$  be ray class field extensions of degree
$p_i$ totally ramified in $\mathfrak{q}_i$ for $i=1,\ldots,h$. Then the genus $g_L$ of the compositum field 
$L=F_1\cdots F_h$ is smaller than 
$$g_L\leq\frac{1}{2}\sum_{i=1}^{h}t_i\prod_{j=1}^{h} p_j.$$
\end{lemmino}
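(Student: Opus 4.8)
The plan is to obtain the bound from the Riemann--Hurwitz genus formula applied to $L/K$, in the same spirit in which \eqref{ray2} is derived for a single ray class field, but now for a compositum. Since $K=\mathbb{F}_q(x)$ is rational we have $g_K=0$, so, with degrees computed over the constant field of $L$ (which, as explained at the end, turns out to be $\mathbb{F}_q$),
$$2g_L-2=-2\,[L:K]+\deg\operatorname{Diff}(L/K).$$
First I would record two structural facts. Each $F_j/K$ is abelian of degree $p_j$, so $L=F_1\cdots F_h$ is abelian and $\operatorname{Gal}(L/K)$ embeds into $\prod_{j=1}^{h}\operatorname{Gal}(F_j/K)$; in particular $[L:K]$ divides $\prod_{j=1}^{h}p_j$. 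Since each $p_j$ divides $\frac{q^{t_j}-1}{q-1}$, which is prime to $p$, the degree $[L:K]$ is prime to $p$, hence \emph{all} ramification in $L/K$ is tame. Secondly, as each $F_j/K$ is unramified away from $\mathfrak{q}_j$, the extension $L/K$ is unramified outside $\{\mathfrak{q}_1,\dots,\mathfrak{q}_h\}$.

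With this the different is easy to bound. By tameness, for a place $Q\mid\mathfrak{q}_i$ of $L$ the different exponent is $e(Q\mid\mathfrak{q}_i)-1$, and $\deg Q=f(Q\mid\mathfrak{q}_i)\,t_i$; together with the fundamental identity $\sum_{Q\mid\mathfrak{q}_i}e(Q\mid\mathfrak{q}_i)f(Q\mid\mathfrak{q}_i)=[L:K]$ and $\sum_{Q\mid\mathfrak{q}_i}f(Q\mid\mathfrak{q}_i)\ge 1$ this gives
$$\deg\operatorname{Diff}(L/K)=\sum_{i=1}^{h}t_i\sum_{Q\mid\mathfrak{q}_i}\bigl(e(Q\mid\mathfrak{q}_i)-1\bigr)f(Q\mid\mathfrak{q}_i)\le\sum_{i=1}^{h}t_i\bigl([L:K]-1\bigr)\le[L:K]\sum_{i=1}^{h}t_i.$$
(If one prefers, Abhyankar's lemma applied to $L=F_i\cdot\prod_{j\ne i}F_j$ shows $e(Q\mid\mathfrak{q}_i)=p_i$, but this refinement is not needed for the inequality.) Substituting into Riemann--Hurwitz and using $[L:K]\ge 1$,
$$2g_L=2-2[L:K]+\deg\operatorname{Diff}(L/K)\le[L:K]\sum_{i=1}^{h}t_i\le\Bigl(\prod_{j=1}^{h}p_j\Bigr)\sum_{i=1}^{h}t_i,$$
which is the claimed bound $g_L\le\frac12\sum_{i=1}^{h}t_i\prod_{j=1}^{h}p_j$.

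The one point needing care — essentially the only non-routine step — is the constant field of $L$: were it some $\mathbb{F}_{q^c}$ with $c>1$, the form of Riemann--Hurwitz used above would have to be corrected. I would resolve this by observing that the constant field is in fact $\mathbb{F}_q$. Indeed, the subgroup of $\operatorname{Gal}(L/K)$ generated by the inertia groups at the $\mathfrak{q}_i$ is all of $\operatorname{Gal}(L/K)$: the inertia group at $\mathfrak{q}_i$ surjects onto $\operatorname{Gal}(F_i/K)$ and is trivial on $\operatorname{Gal}(F_j/K)$ for $j\ne i$, because $F_i/K$ is totally ramified and each $F_j/K$ unramified at $\mathfrak{q}_i$, so these inertia groups fill up $\prod_{j}\operatorname{Gal}(F_j/K)\supseteq\operatorname{Gal}(L/K)$. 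Hence the maximal unramified subextension of $L/K$ is $K$ itself, using that $h_K=1$ for the rational function field (so $K$ has no nontrivial unramified abelian extension with constant field $\mathbb{F}_q$), and therefore the constant field does not grow. Alternatively, one runs the same computation with the general Hurwitz formula relative to the constant field of $L$, where $c$ divides $[L:K]$ and appears dividing both terms, so the final estimate is unchanged (in fact only improved).
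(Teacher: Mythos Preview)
Your proof is correct and is precisely the direct application of the Hurwitz genus formula that the paper indicates (the paper gives no further detail beyond that one line). Your extra care about tameness and the constant field of $L$ fills in exactly the points the paper leaves implicit, and your inertia-group argument in fact shows $[L:K]=\prod_j p_j$, so the bound via $[L:K]\le\prod_j p_j$ is even an equality here.
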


\begin{prop}\label{enumeq2}
Let $m$ and $l$ be distinct prime numbers with $l$ and $m$ greater than $3\log_q(n)$
and let $\alpha$ and $\beta$ be positive integers with $\alpha\leq a_m$ %$\frac{n}{\log_q(n)}$,
 and $\beta\leq a_l$. %$\frac{n}{\log_q(n)}$.
 Let $C_1>0$ be a real constant 
and let $C_2>0$ be a real constant with $C_2<1$ as in Proposition \ref{enumeq}.
Let $\mathfrak{q}_1,\ldots,\mathfrak{q}_\alpha$ (resp. $\mathfrak{p}_1,\ldots,\mathfrak{p}_\beta$)
be distinct places of $K$ of degree $m$ (resp. $l$) with $\alpha>C_1\frac{n}{\log_q(n)}$.
Let $\mathfrak{m}$ be the effective divisor $\sum_{i=1}^\alpha\mathfrak{q}_i+\sum_{j=1}^\beta\mathfrak{p}_j$.
Let $k_1$ and $k_2$ be the integers $\frac{q^m-1}{q-1}$ and $\frac{q^l-1}{q-1}$ respectively
and set $d=\frac{(q^m-1)^\alpha(q^l-1)^\beta}{q-1}$. Assume that $k_1$ and $k_2$ are both prime to $q-1$. 
%and suppose $\alpha\geq C\frac{n}{\log_q(n)}$ for a suitable constant $C>0$ as before.
%and $\log_q(2m)\geq \frac{19}{2}$. 

Then there is an integer $n_0$ such that when $n>n_0$ and 
%m>max\{c_q',c_q\}$ as in Lemma \ref{enumeq} and Corollary \ref{atmost} and
$$\frac{C_2}{2}d>\frac{q\cdot q^n}{n},$$
%$$\frac{1}{4}k_1^\alpha k_2^\beta>\frac{q\cdot q^n}{n},$$
%,q^{n/2}(\alpha m+\beta l) n^{\max\{\alpha,\beta\}+1}\},$$
there is a function field extension $K^\mathfrak{m}_i/K$ for a certain 
$i\in\{1,\ldots,d\}$ without places of degree smaller than $n$.
\end{prop}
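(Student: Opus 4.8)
The plan is to combine the counting estimate of Lemma \ref{enumeq} (applied to the degree-$m$ part of $\mathfrak{m}$) with the genus bound of Lemma \ref{genq}, using the second set of ramified places $\mathfrak{p}_1,\dots,\mathfrak{p}_\beta$ only to kill points of degree in the range $[\tfrac{n}{\log_q n},\,n)$, which Lemma \ref{enumeq} does not control. First I would invoke Lemma \ref{enumeq} with the divisor $\mathfrak{m}_1=\sum_{i=1}^\alpha \mathfrak{q}_i$: since $\alpha>C_1\frac{n}{\log_q n}$ and $m>\log_q n$, for $n$ large there are at least $C_2\,d_1$ of the extensions $K^{\mathfrak{m}_1}_i/K$ (with $d_1=\frac{(q^m-1)^\alpha}{q-1}$) in which every place $P$ of degree $<\frac{n}{\log_q n}$ has $f(P_i\mid P)>\frac{n}{\deg P}\ge n/\deg P$, hence lies under no place of $K^{\mathfrak{m}_1}_i$ of degree $<n$. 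Each such $K^{\mathfrak{m}_1}_i$ sits inside $q^\beta$-many (in fact $(q^l-1)^\beta/(q-1)\cdot$ factor) extensions $K^{\mathfrak{m}}_j$; more to the point, the $K^{\mathfrak{m}}_j$ are indexed so that restricting to the $\mathfrak{q}$-part and the $\mathfrak{p}$-part is essentially independent, so at least $\frac{C_2}{2}d$ of the $K^{\mathfrak{m}}_j$ have good behaviour at all places of degree $<\frac{n}{\log_q n}$ and, simultaneously, I still have freedom in the $\mathfrak{p}$-part to handle the remaining degree range.

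Next I would count, among these $\ge \frac{C_2}{2}d$ good extensions, how many can possibly possess a place of degree $e$ for some $e$ with $\frac{n}{\log_q n}\le e<n$. A place $P$ of such degree $e$ splits into a place of degree $<n$ in $K^{\mathfrak{m}}_j$ only if $f(P_j\mid P)$ is bounded, i.e.\ the Frobenius has small order; running Corollary \ref{precisi}/Remark \ref{precisi3} through the $\mathfrak{p}$-part exactly as in the proof of Lemma \ref{enumeq} (now $l>3\log_q n>\log_q e$ since $e<n$, so $k_2>e$ and $k_2$ is prime to $q-1$ by hypothesis), for each such $P$ the number of $K^{\mathfrak{m}}_j$ in which $P$ fails to become a place of degree $\ge n$ is at most $d/s^{\beta-1}$ summed over the $\le l$ primes $s\mid k_2$, hence $\le l\,d/(2l)^{\beta-1}$. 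Summing over the at most $\frac{q\cdot q^n}{n}$ places of $K$ of degree $<n$ (bound (\ref{bond})), and — crucially — over the degree range $[\tfrac{n}{\log_q n},n)$ only, the total number of \emph{bad} $j$ is at most $\frac{q\cdot q^n}{n}\cdot l\cdot d/(2l)^{\beta-1}$, which under the standing hypothesis $\frac{C_2}{2}d>\frac{q\cdot q^n}{n}$ is $o(\frac{C_2}{2}d)$ once $\beta$ is at least some absolute constant (and $\beta$ can be taken that large because $\beta\le a_l$ grows). Therefore some $j$ is good in both ranges, i.e.\ $K^{\mathfrak{m}}_j$ has no place of degree $<n$.

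More precisely, I would organize the bound so that the pigeonhole is clean: let $B_1$ be the set of $j\in\{1,\dots,d\}$ that are bad at some place of degree $<\frac{n}{\log_q n}$ (so $|B_1|\le(1-C_2)d$ by Lemma \ref{enumeq}, transported from the $\mathfrak{q}$-part), and let $B_2$ be the set bad at some place of degree in $[\frac{n}{\log_q n},n)$ (bounded as above by, say, $\frac{q\,q^n}{n}$ times a quantity that the hypothesis makes $<\frac{C_2}{2}d$). Then $|B_1\cup B_2|\le (1-C_2)d+\frac{C_2}{2}d=(1-\frac{C_2}{2})d<d$, so the complement is nonempty and any $j$ in it gives the desired curve $K^{\mathfrak{m}}_j/K$, whose genus is controlled by Lemma \ref{genq} applied to the compositum of the totally-ramified degree-$p_i$ subextensions at the $\mathfrak{q}$'s and $\mathfrak{p}$'s.

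The main obstacle I expect is the bookkeeping in the second paragraph: making rigorous the claim that the counting argument of Lemma \ref{enumeq} can be re-run on the $\mathfrak{p}$-part \emph{only for the intermediate degree range} while the $\mathfrak{q}$-part already handles small degrees, and that the two "bad" sets, defined via different subdivisors of $\mathfrak{m}$, can be added inside the single index set $\{1,\dots,d\}$. This requires knowing that $K^{\mathfrak{m}}_j$ restricts compatibly to both $K^{\mathfrak{m}_1}$-extensions and $K^{\mathfrak{m}_2}$-extensions — which follows from Theorem \ref{artintate} and Remark \ref{sub1} since the congruence subgroups at disjoint supports intersect transversally — and that the bound (\ref{bond}) is being applied to the correct (restricted) range so that the factor $q^{n/\log_q n}$ never reappears. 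Getting the hypothesis $\frac{C_2}{2}d>\frac{q\,q^n}{n}$ to absorb the surviving $\frac{q\,q^n}{n}$ factor is then immediate; everything else is the routine estimate already carried out for Lemma \ref{enumeq}.
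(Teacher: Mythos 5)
There is a genuine gap, and it is the central point of the proposition rather than the bookkeeping you flag as the ``main obstacle''. Your treatment of the range $\left[\frac{n}{\log_q n},\,n\right)$ is a union bound over places: (number of places of degree $<n$) $\times$ (number of extensions per place in which that place stays small), with the second factor bounded via Remark \ref{precisi3} applied to the $\mathfrak{p}$-part, giving roughly $l\,d/(2l)^{\beta}$ per place. But the arithmetic does not close. For this product to be $o(d)$ you need $(2l)^{\beta}\gg \frac{q^{n}}{n}$, i.e.\ $\beta\log(2l)\gtrsim n$; since $l=O(\log_q n)$ this forces $\beta\gtrsim n/\log\log n$, not ``some absolute constant'' as you claim. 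The hypotheses of the proposition permit $\beta=1$ (only $\alpha$ is required to exceed $C_1\frac{n}{\log_q n}$), and even in the eventual application $\beta\le n/l$, so $\beta\log(2l)=o(n)$ always; substituting the standing hypothesis $\frac{C_2}{2}d>\frac{q\,q^n}{n}$ into your bound actually yields a bad-set estimate of order $d\cdot d/(2l)^{\beta-1}\gg d$. So the second ``bad set'' $B_2$ is not small and the pigeonhole fails.

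The idea you are missing is the one the paper uses: do not multiply the per-place failure count by the \emph{total} number of places of degree $d'$, but only by the number $A_{d_1,d_2,d'}$ of places of degree $d'$ that are simultaneously totally split in all the relevant cyclic subextensions (a place of degree $d'\ge \frac{n}{\log_q n}$ can lie under a place of degree $<n$ only if it is totally split in every $K^{\mathfrak{q}_j}_i/K$ and $K^{\mathfrak{p}_h}_i/K$, by Lemma \ref{eul}, and the number of $i$ for which this happens is at most $d_1^{\alpha}d_2^{\beta}d''^{\alpha+\beta-1}$ by Corollary \ref{precisi}). The effective Chebotarev density theorem, with the genus of the compositum controlled by Lemma \ref{genq}, gives $A_{d_1,d_2,d'}\le \frac{q^{d'}}{d'\,d_1^{\alpha}d_2^{\beta}d''^{\alpha+\beta-1}}+\text{(error)}$, so the product telescopes to $\sum_{d'<n}\frac{q^{d'}}{d'}\le q\frac{q^n}{n}$ plus a manageable error term, and this is exactly what the hypothesis $\frac{C_2}{2}d>\frac{q\,q^n}{n}$ is designed to absorb. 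This Chebotarev step is what makes the final bound $C_pq^n$ rather than $C_p n q^n$; without it no choice of $\alpha,\beta$ within the stated constraints rescues the union bound. (Minor additional point: in your sketch Lemma \ref{genq} is invoked to bound the genus of $K^{\mathfrak{m}}_j$, but in the proof of this proposition its role is to bound the genus of the compositum $L$ entering the Chebotarev error term.)
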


\begin{proof}
We may assume that $l$ and $m$ are smaller than $\frac{n}{\log_q(n)}$ otherwise the proof would
be  more easy. By Lemma \ref{enumeq} there are at least $C_2d$ function field extensions 
$K^\mathfrak{m}_i/K$ for $i=1,\ldots,d$ such that $deg(P)f(P_i|P)\geq n$ whenever
$deg(P)<\frac{n}{\log_q(n)}$ and $P_i$ is a place over $P$.
%the function field $M$ has no place of degree smaller than $\frac{n}{\log_q(n)}$. By Lemma \ref{eul} 
 In one of these field extensions $K^\mathfrak{m}_i$ of $K$ there is a 
place of degree smaller than $n$ only if there is a place $P$ of $K$
of degree $d'<n$ with $d'\geq \frac{n}{\log_q(n)}$ such that $P$ is
totally split in $K^{\mathfrak{q}_j}_i/K$ for all $j\in\{1,\ldots,\alpha\}$ 
and in $K^{\mathfrak{p}_h}_{i}/K$ for all $h\in\{1,\ldots,\beta\}$ by Lemma \ref{eul},
where $K^{\mathfrak{q}_j}_i$ and $K^\mathfrak{p}_h$ are the ray class fields of $K$ with conductor
$\mathfrak{q}_j$ and $\mathfrak{p}_h$, respectively, contained in $K^\mathfrak{m}_i$. 
We are going to estimate the number of such function field extensions $K^\mathfrak{m}_i/K$.
%The rest of the proof is similar to the proof of Corollary \ref{elk}.
 
For a fixed $j\leq \alpha$ we consider $K^{\mathfrak{q}_j}_i/K$ for $i\in\{1,\ldots,k_1\}$.
There are at most $d_1=\gcd(d',k_1)$ function field
extensions $K^{\mathfrak{q}_j}_i/K$ such that $P$ is
totally split by Corollary \ref{precisi}. Similarly for a fixed $h\leq \beta$
there are at most $d_2=\gcd(d',k_2)$ function
field extensions $K^{\mathfrak{q}_h}_i/K$ with $i\in\{1,\ldots,k_2\}$
such that $P$ is totally split. We denote by $d''$ the greatest common divisor $\gcd(q-1,d')$. It follows that there
are at most $d_1^\alpha d_2^\beta d''^{\alpha+\beta-1}$ extensions $K^\mathfrak{m}_i/K$ 
with $i\in\{1,\ldots,d\}$ such that $P$ is totally split. 

Let $A_{d_1,d_2,d'}$  be the number of places of $K$ of degree $d'$ totally split in %the compositum $F$ of 
all the subextensions of degree $d_1 d''$ (resp. $d_2 d''$) %and $l_2$ of
of  the ray class fields $K^{\mathfrak{q}_j}_i$ for $i\in\{1,\ldots,k_1\}$ and $j\in\{1,\ldots,\alpha\}$
(resp. $K^{\mathfrak{p}_h}_i$ for $i\in\{1,\ldots,k_2\}$ and $h\in\{1,\ldots,\beta\}$). 
%whenever $S_i$ (resp. $T_j$) is a single place for $i=1,2,\ldots,\alpha$ (resp. $j=1,\ldots,\beta$)
% and $p_1|\frac{q^m-1}{q-1}$ (resp. $p_2|\frac{q^l-1}{q-1}$) is a positive integer %strictly greater  than $1$
 % (non necessarily $p_1$ and $p_2$ are prime numbers).
 %and $K^{\mathfrak{l}_j}_\infty$ for  $j=1,2,\ldots,\beta$.
 %$A_d\leq \frac{a_d}{l_1^\alpha l_2^\beta}$ by the Chebotarev Theorem.
Then $$A_{d_1,d_2,d'}\leq \frac{q^{d'}}{d'd_1^{\alpha}d_2^\beta d''^{\alpha+\beta-1}}+2\frac{g_L}{d_1^\alpha d_2^\beta d''^{\alpha+\beta-1}}q^{d'/2}+deg(\mathfrak{m})$$
by %Lemma \ref{cheb4} 
the Chebotarev Theorem (see \cite{ms94}), where $L$ is the compositum of the subextensions of degree $d_1$ and
$d_2$ of $K^{\mathfrak{q}_j}_i$ and $K^{\mathfrak{p}_h}_i$. By Lemma \ref{genq} we get
%\begin{equation}\label{cheb}
$$A_{d_1,d_2,d'}\leq \frac{q^{d'}}{d'd_1^{\alpha}d_2^\beta d''^{\alpha+\beta-1}}+(q^{d'/2}+1)(m\alpha+l\beta).$$
%\end{equation} 

 By the previous Proposition there are at least $C_2 d$  %$\frac{1}{2}k_1^\alpha k_2^\beta$ 
distinct extensions $K^\mathfrak{m}_i/K$ 
%satisfying the hypothesis of 
such that $f(Q|P)deg(P)>n$ when $deg(P)<\frac{n}{\log_q(n)}$ but there are at most
$$\sum_{d'=\frac{n}{\log_q(n)}}^{n-1} A_{d_1,d_2,d'}d_1^\alpha d_2^\beta d''^{\alpha+\beta-1}$$
 extensions $K^\mathfrak{m}_i/K$ with at least one totally split place of degree 
$ d'$ by Lemma \ref{precisi}.
In particular this number is smaller than
%\sum_{d'=\frac{n}{2\log_q(n)}}^{n-1}A_{d_1,d_2,d'}d_1^\alpha d_2^\beta (q-1,d')^{\alpha+\beta-1}\leq 
$$\sum_{d'=\frac{n}{\log_q(n)}}^{n-1}\frac{q^{d'}}{d'}+(q^{d'/2}+1)(\alpha m+\beta l)d_1^\alpha d_2^\beta d''^{\alpha+\beta-1}.$$
 But $$d_1d''\leq d'<n<k_1^{1/3}$$ and similarly for $d_2d''$. 
Moreover $\alpha m+\beta l<2\log_q(d)$.
 %In particular, when $n$ is large and $\gamma=max\{\alpha,\beta\}$, there are at most
 It follows that there are at most
%$$ \sum_{d'=\frac{n}{2\log_q(n)}}^{n-1}\frac{q^{d'}}{d'}+q^{n/2}(\alpha m+\beta l)k_1^{\alpha/3}k_2^{\beta/3} \leq $$
\begin{equation}\label{final}
 q\frac{q^n}{n}+2nq^{n/2}\log_q(d)d^{1/3}
\end{equation}
extensions $K^\mathfrak{m}_i/K$ such that at least one point of degree $d'<n$ is totally split.
The right hand side in (\ref{final}) is smaller than $C_2 d$ if
$$q\frac{q^n}{n}<\frac{C_2}{2} d$$
and $$2nq^{n/2}\log_q(d)d^{1/3}<\frac{C_2}{2} d.$$
The first condition holds by hypothesis, the second one holds when $n$ is large because $d>q^\frac{n}{\log_q(n)}$. 
 %$\frac{d^{2/3}{q^{n/2}}\geq q^{n/6}\geq \frac{2}{C_2}n(\alpha m+\beta l)$.
 So there is at least one function field extension $K^\mathfrak{m}_i/K$ without places of degree smaller than $n$.
\end{proof}

In order to prove Theorem \ref{gol} we choose $l$ and $m$ greater than $3\log_q(n)$ 
but smaller than $C\log_q(n)$ for a suitable constant $C>0$ and we find suitable
%In the rest of the section we have to show that there is a 
 $\alpha$ and $\beta$ smaller than $n$ with $\alpha$ or $\beta$ greater than $C_1\frac{n}{\log_q(n)}$
 for an other suitable $C_1>0$ such that the integer $$d=\frac{(q^m-1)^\alpha (q^l-1)^\beta}{q-1}$$
 is bigger than $4q\frac{q^n}{n}$ but smaller than
$C'4q\cdot\frac{q^n}{n}$ for a suitable constant $C'>1$ (not depending on
$n$). % In fact by Proposition \ref{enumeq2} if $k_1^\alpha k_2^\beta>4q\frac{q^n}{n}$
%then there is a function field extension 
%$ML/\mathbb{F}_q(x)$ without places of degree smaller than $n$.
In fact, when $d\leq C'4q\cdot\frac{q^n}{n}$ then $m\alpha+l\beta\leq n$ when $n$ is large and
the genus $g$ of $K^\mathfrak{m}_i$ is bounded by $g\leq \frac{m\alpha+l\beta}{2} d$ 
for all $i\in\{1,\ldots,d\}$, by (\ref{ray2}), so $g\leq \frac{n}{2}d\leq 2C'q\cdot q^n$.
We will see that $C'=q$ is a possible choice for $C'$. 

%By the Chebychev Theorem (also known as the Bertrand Postulate)
%there is at least one such choice for $l$ and $m$ with
%$2\log_q(n)< l< 4\log_q(n)$ and $4\log_q(n)< m< 8\log_q(n)$.
The existence of suitable $\alpha$ and $\beta$ is proved by the next Lemma. 
\begin{lemmino}
Let $l$ and $m$ be coprime numbers with $l<m<2l$. Then there is a constant $l_0$,
such that when $l>l_0$ then for any real number $r$ greater than $q^{2m^3}$
there are two positive integers $\alpha$ and $\beta$ such that
\begin{equation}\label{richiesta}
r<\frac{(q^m-1)^\alpha(q^l-1)^\beta}{q-1}< rq.
\end{equation}
\end{lemmino}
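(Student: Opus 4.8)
The plan is to take logarithms and reduce the statement to a density property of the additive sub‑semigroup of $\mathbb R_{>0}$ generated by $\mu:=\log_q(q^m-1)$ and $\nu:=\log_q(q^l-1)$. Writing $r=q^R$ and $c:=\log_q(q-1)\in[0,1)$, the inequality \eqref{richiesta} is equivalent to the existence of integers $\alpha,\beta\ge 1$ with $\alpha\mu+\beta\nu\in(R+c,\,R+c+1)$; so, setting $Y:=R+c>2m^3$, it suffices to show that $\mathcal V:=\{\alpha\mu+\beta\nu:\alpha,\beta\in\mathbb Z_{\ge1}\}$ meets every open unit interval $(Y,Y+1)$ with $Y>2m^3$.

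First I would record some elementary estimates. Put $\varepsilon_t:=\log_q\frac{q^t}{q^t-1}$, so $\mu=m-\varepsilon_m$ and $\nu=l-\varepsilon_l$; then $\frac1{q^t\ln q}\le\varepsilon_t<\frac3{q^t}$, $0<\varepsilon_m<\varepsilon_l$, and — since $m\ge l+1$ — $\varepsilon_m/\varepsilon_l\le\frac{q}{q^2-1}\le\frac23$. Consequently $s:=l\mu-m\nu=m\varepsilon_l-l\varepsilon_m$ is positive, satisfies $\frac{l}{3q^l\ln q}\le s\le\frac{6l}{q^l}$ (the upper bound uses $m<2l$), and in particular $0<s<1$ once $l$ exceeds an absolute constant $l_0$. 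Next, fix any integer $N\ge lm+1$: since $\gcd(l,m)=1$, $N$ is represented by $\alpha m+\beta l$ with $\alpha,\beta\ge1$, every such representation has the shape $(\alpha_0+jl,\ \beta_0-jm)$ with $j$ running over an interval of integers $\{0,\dots,K\}$, and hence the associated values $\alpha\mu+\beta\nu=N-(\alpha\varepsilon_m+\beta\varepsilon_l)$ form an arithmetic progression with common difference $s$, all of whose terms lie in $\mathcal V$. Write $b_N$ for its minimum (attained at the $\alpha$-minimal representation, where $\alpha\le l$) and $t_N$ for its maximum (attained at the $\beta$-minimal one, where $\beta\le m+1$).

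Now fix $Y>2m^3$. Since $t_N<N$ for every $N$ while $t_N\ge N(1-\varepsilon_m/m)-(m+1)\varepsilon_l\to\infty$, the index $N_1:=\min\{N:t_N\ge Y+1\}$ is well defined, satisfies $N_1>Y>2m^3$, and, by minimality, $t_{N_1-1}<Y+1$. The crux is the gap bound $b_{N_1}-t_{N_1-1}<1$: expanding the left‑hand side and using $\alpha_{\min}(N_1)\ge1$, $\beta_{\max}(N_1)\ge N_1/l-m$, $\alpha_{\max}(N_1-1)<N_1/m$ and $\beta_{\min}(N_1-1)\le m+1$ reduces it to $N_1/l>(2m+1)+(N_1/m)(\varepsilon_m/\varepsilon_l)$, and then $\varepsilon_m/\varepsilon_l\le\frac23$ reduces it further to $N_1>6lm+3l$, which holds because $N_1>2m^3$ and $2m^3>6lm+3l$ for $l>l_0$. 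Granting the gap bound, a short case analysis finishes the argument: if $b_{N_1}\le Y$, the progression attached to $N_1$ has steps $s<1$, starts at $b_{N_1}\le Y$ and reaches $t_{N_1}\ge Y+1$, so one of its terms lies in $(Y,Y+1)$; if $Y<b_{N_1}<Y+1$ then $b_{N_1}\in\mathcal V\cap(Y,Y+1)$; and if $b_{N_1}\ge Y+1$ then $t_{N_1-1}=b_{N_1}-(b_{N_1}-t_{N_1-1})>Y$ while $t_{N_1-1}<Y+1$, so $t_{N_1-1}\in\mathcal V\cap(Y,Y+1)$. Exponentiating base $q$ the relation $\alpha\mu+\beta\nu\in(R+c,R+c+1)$ and using $q^R=r$ then gives integers $\alpha,\beta\ge1$ satisfying \eqref{richiesta}.

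The only genuine obstacle is the gap bound. A single progression $\{b_N,\dots,t_N\}$ need not by itself straddle the target interval — for $Y$ close to $2m^3$ these progressions have length comparable to $Ns/(lm)$, which can be far smaller than $1$ — so one cannot just exhibit one long progression covering $(Y,Y+1)$; one must instead compare a progression with the next one, and it is precisely there that the hypothesis $r>q^{2m^3}$ enters (through $N_1>6lm+3l$). Everything else is a matter of the elementary estimates above, valid as soon as $l>l_0$.
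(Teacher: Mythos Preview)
Your argument is correct and the estimates all check out: the ratio bound $\varepsilon_m/\varepsilon_l\le q/(q^2-1)\le 2/3$ (valid for $m\ge l+1$), the sign and size of $s=m\varepsilon_l-l\varepsilon_m$, the reduction of the gap inequality $b_{N_1}-t_{N_1-1}<1$ to $N_1>6lm+3l$ via $3m-2l>m$, and the final trichotomy. The route, however, is genuinely different from the paper's. The paper takes the Farey predecessor $h/k$ of $l/m$ (so $kl-hm=1$, $0<h<k<m$), notes that the single step $v:=k\,q_l-h\,q_m=1-k\varepsilon_l+h\varepsilon_m$ lies in $(\tfrac12,1)$ for large $l$, starts from $z=\lfloor R/q_m\rfloor\,q_m$, and increments by $v$ until landing in $(R-1,R]$; the hypothesis $r>q^{2m^3}$ is what ensures the required number $j$ of increments keeps $\alpha=c-jh$ positive. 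You instead slice $\{\alpha\mu+\beta\nu:\alpha,\beta\ge1\}$ by the level sets $N=\alpha m+\beta l$, observe each slice is an arithmetic progression with the \emph{tiny} step $s=l\mu-m\nu$, and prove that consecutive slices have gap $<1$ once $N>2m^3$. The paper's single-step walk is shorter and more direct; your fibre argument avoids Farey series entirely and, as a small bonus, always produces $\alpha,\beta\ge1$, whereas the paper's first case ($z>R-1$) gives $\beta=0$.
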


\begin{proof}
Let $R$ be the real number $\log_q(rq)+\log_q(q-1)$.
Taking logarithm of both sides in (\ref{richiesta}) we get the equivalent condition
$$R-1<\alpha q_m+\beta q_l\leq R,$$
where $q_m$ and $q_l$ denote the real numbers $\log_q(q^m-1)$ and $\log_q(q^l-1)$.

By means of the Farey series (see \cite{hawr}, Chapter III) we can find positive integers 
$h$ and $k$ with $0<h<k<m$ such that the real number 
$$v=kq_l-hq_m$$ 
satisfies $\frac{1}{2}<v<1$. In fact $\frac{h}{k}$ is the rational number
preceding $\frac{l}{m}$ in the Farey series and $\frac{h}{k}<\frac{q_l}{q_m}<\frac{l}{m}$ when $l$ is large 
compared to $q$. In particular $v<kl-hm=1$ by an elementary properties of the Farey series and $v>\frac{1}{2}$ otherwise
$$\frac{q_l}{q_m}-\frac{h}{k}=\frac{v}{kq_m}<\frac{1}{2kq_m}$$
 so 
$$\frac{l}{m}-\frac{q_l}{q_m}+\frac{1}{2kq_m}>\frac{l}{m}-\frac{h}{k}=\frac{1}{km}$$
and so $$\frac{l}{m}-\frac{q_l}{q_m}>\frac{1}{km}-\frac{1}{2kq_m}>\frac{1}{4m(m-1)}$$
and we get a contradiction because $\frac{l}{m}-\frac{q_l}{q_m}<\frac{1}{4m(m-1)}$ when $l$ is large.

Let $c$ be the integer $[\frac{R}{q_m}]$ and let $z$ be the real number $cq_m$. If $z>R-1$ then 
we choose $\alpha=c$ and $\beta = 0$  and the Lemma follows. Otherwise we define the succession
$z_i=z+iv$ for all integers $i\geq 0$. Let $j$ be the minimum integer such that $z_j>R-1$.  
Then $z_j<R$ because $v<1$ and so $j<\frac{c}{h}$ otherwise $z_j$ would be greater than $R$,
because $v>\frac{1}{2}$ and $R>2m^3$, but this is not the case. We choose
$\alpha=c-jh$ and $\beta=jk$ and the Lemma follows.
\end{proof}

%\begin{figure}[htbp]
%\begin{center}
%\includegraphics[width=12cm]{latt27.eps}
%\end{center}
%\end{figure}

\begin{proof}[Proof of Theorem \ref{gol}] 
We assume before $q=p$ is a prime.

Choose prime numbers $l$ and $m$ and two positive integers $\alpha$ and $\beta$
satisfying (\ref{richiesta}) in the previous Lemma with $r=4p\frac{p^n}{n}$. Such choice of $r$
verifies the hypothesis of the Lemma when $n$ is large and $l$ and $m$
are smaller than $C\log_p(n)$ for a constant $C>0$.
By the Bertrand postulate there are at least two primes smaller than $C\log_p(n)$ when $C\geq 12$
so there are such integers.

It is easy to see that $\alpha<a_m$ and $\beta<a_l$ if $l$ and $m$ are greater than $3\log_p(n)$ 
otherwise $p^{m\alpha+l\beta}$ would be greater than $p^{n^3}$ and it would not satisfy (\ref{richiesta}).
In a similar way we see that $\alpha$ or $\beta$ is greater than, say, $\frac{1}{48}\frac{n}{\log_p(n)}$ otherwise
$p^{m\alpha+l\beta}$ would be smaller than $p^{n/2}$ in contrast with (\ref{richiesta}). So we
can apply Proposition \ref{enumeq2} with $C_1=\frac{1}{48}$. We get a  function field without places of degree
smaller than $n$ for all $n>n_0$ for a suitable constant $n_0$. %(not depending on $n$). 
 We have already seen that the genus of such function field is smaller than $\frac{1}{2(p-1)}p^n$ by (\ref{ray2}). 
Let $C_p$ be the constant $\frac{1}{2(p-1)}p^{n_0}$. Then there is a function field with constant field $\mathbb{F}_p$
without places of degree smaller than $n$ of genus smaller than $C_pp^n$ for all integer $n>0$.

Now let $q=p^c$ be a prime power of $p$. By the previous case there is a function field $K$ of genus 
$g_K\leq C_p p^{cn}=C_p q^n$ over $\mathbb{F}_p$ without places of degree smaller of $cn$. The constant field
extension $K\mathbb{F}_{q}$  is a function field over $\mathbb{F}_q$ with the same genus
without places of degree smaller than $n$. This concludes the proof.
\end{proof}

\section{Tables}

We list examples of curves over $\mathbb{F}_q$ without points of degree
$d'$ such that $d'\leq n$ when $q=2$ and $n< 20$.

The integer $d$ in the table is the degree of a function field extension $K/\mathbb{F}_q(x)$
of the rational function field with genus $g$ and constant field $\mathbb{F}_q$.
In this table the field $K$ is always a subfield of the ray class field $K^\mathfrak{m}_S$
of conductor $\mathfrak{m}$. The irreducible polynomials in the forth column correspond to the 
places in the support of $\mathfrak{m}$ with multiplicity.
The polynomial in $\mathbb{F}_q(x)$ corresponding to the place $S$ totally split 
in $K^\mathfrak{m}_S/\mathbb{F}_q(x)$ is showed in the last column.
\large{
%\normalsize{
\begin{landscape}
\begin{center}
\bfseries{Pointless curves for $\mathbf{q=2}$}
\end{center}

\begin{center}

\begin{tabular}{|l|c|c|c|c|}

\hline
%\hline
$\mathbf{n}$ & $\mathbf{g}$ & $\mathbf{d}$& $\mathfrak{m}$ & $\mathbf{S}$\\
\hline
\hline
1 & 2 & 2& $(x^3+x+1)^2$ & $(x^3+x^2+1)$\\
\hline
2 & 3 & 7& $(x^3+x+1)$& $(x^4+x+1)$\\
\hline
3 & 4 & 5& $(x^4+x+1)$& $(x^7+x^4+1)$\\
\hline
5& 12 & 7& $(x^6+x^4+x^3+x+1)$& $(x^8+x^5+x^3+x^2+1)$\\
%\hline
%6& 40 & 11& $x^{10}+x^4+x^3+x+1$& $t^7+t^6+t^5+t^4+1$ NO!\\
%6& 21&9&$x^9+x+1$&$x^8 + x^7 + x^5 + x^4 + 1$\\
\hline
7& 48 & 17& $(x^8+x^7+x^6+x+1)$& $(x^9 + x^7 + x^5 + x^2 + 1)$\\
\hline
%8& 99 & 23& $x^{11}+x^{10}+x^9+x^7+x^5+x^4+x^3+x+1$ & $x^9+x^5+1$\\
%t^11+t^10+t^8+t^5+t^3+t^2+1  t^19+t^18+t^16+t^14+t^13+t^11+t^10+t^9+t^7+t^4+t^3+t^2+1
8&78&$7\cdot 7$ &$(x^3+x^2+1,x^3+x+1)$&$(x^9 + x^7  + x^2 + x + 1)$\\
%8&56&$7\cdot 5$ &$(x^3+x+1,x^4+x+1)$&$(x^9 + x^4 + 1)$\\
\hline
9& 120&$31$&$(x^{10}+x^3+1)$&$(x^{11} + x^9 + x^7 + x^2 + 1)$\\
\hline
11& 362 & $15\cdot 7$& $(x^4+x+1,x^6+x^5+x^3+x^2+1)$ & $(x^{13}+ x^8 + x^5 + x^3 + 1)$\\
\hline
12& 588 & $31\cdot 7$& $(x^5+x^2+1,x^3+x+1)$& $(x^{13} + x^{12} + x^{10} + x^7 + x^4 + x+ 1)$\\
\hline
13&1480&$31\cdot 15 $&$(x^5+x^2+1,x^4+x+1)$&$(x^{14} + x^{13} + x^5 + x^4 + x^3 + x^2 + 1)$\\
%13&2898 &$31\cdot 17$& $(x^5+x^2+1,x^8+x^7+x^6+x+1)$&$(x^{14} + x^{13} + x^6 + x^4 + 1)$\\
\hline
14&3342&$127\cdot 7$&$(x^7+x+1,x^3+x+1)$&$(x^{15} + x^{14} +x^{13} +x^7 +x^6 +x^4 +x^2 + x+ 1)$\\
%14& 3199 & $31\cdot 31$&$(x^{5}+x^2+1,x^5+x^3+1)$&$(x^{15}+x^{13}+x^{12}+x^{11}+x^{10}+x^8+x^6+x^2+1)$\\
\hline
15& 8940&$73\cdot 17$&$(x^{9}+x^4+1,x^8+x^5+x^3+x^2+1)$&$(x^{16}+x^{14}+x^{13}+x^{11}+x^{10}+x^7+x^4+x+1)$\\
%15& 10453&$23\cdot 63$&$(x^{11}+x^9+1,x^6+x+1)$&$(x^{16}+x^{15}+x^{12}+x^{11}+x^9+x^7+x^4+x^2+1)$\\
\hline
16&19861&$23\cdot 89$&$(x^{11}+x^6+x^5+x^2+1,x^{11}+x^9+1)$&$(x^{18} + x^{17} + x^{11} + x^9 + x^7 + x^4 + 1)$\\
\hline
17&41440&$89\cdot 63$&$(x^{11}+x^9+1,x^6+x+1)$&$(x^{18} + x^{17} + x^{16} + x^{11} + x^9 + x^4 + 1)$\\ 
\hline
18&89415&$127\cdot 89$&$(x^7+x+1,x^{11}+x^9+1)$&$(x^{19} + x^{18} + x^{15} + x^{14}+x^{11}+x^7+x^3+ x + 1)$\\
\hline
19&95886 & $127\cdot 127$&$(x^7+x+1,x^7+x^6+1)$&$(x^{20} + x^{19} + x^{15} + x^{14} + x^{13} + x^2 + 1)$\\
\hline
\end{tabular}
\end{center}
\end{landscape}
}

\subsection*{Acknowledgements}
This research is part of my Phd thesis at the Universit\`a di Roma 'Sapienza'.
I am grateful to Prof. Ren\'e Schoof, Universit\`a di Roma 'Tor Vergata', for his patience and support.
Part of this work was done when I was a visiting student at the Mathematical Institute at Leiden University.

\begin{thebibliography}{HD}

%% Use the widest label as parameter.

%% In IMPAN journals, only the title is italicized; boldface is not used.
%% The issue number is only given when the issues are paginated separately.

%%%%%%%%%%% To ease editing, use normal size:

%\normalsize
\baselineskip=17pt

%%%%%%%%%%%%%

%\bibitem[G]{Gratzer} G. Gr\"atzer,
%\emph{More Math into \LaTeX},
%4th ed., Springer, Berlin, 2007.

%\begin {thebibliography}{99}
%\large
%\normalsize
\bibitem{arta} E. Artin, J. Tate, \emph{Class field theory}, New York, W.A. Benjamin (1967).
%\bibitem{auth} R. Auer, \emph{Ray class fields of global function fields with many rational places}, dissertation at the Oldenburg University (1999).
\bibitem{au00} R. Auer, \emph{Ray class fields of global function fields with many rational places}, Acta Arithmetica 95, 97-122 (2000).
\bibitem{futo96} R. Fuhrmann, F. Torres, \emph{The genus of curves over finite fields
with many rational points}, Manuscr. Math., 89, 103-106 (1996).
\bibitem{hawr} G.H. Hardy, E.M. Wright, \emph{An introduction to the theory of numbers}, Oxford Science Publications, Clarendon (1938).
\bibitem{hola05} H. Howe, K. Lauter, J. Top, \emph{Pointless curves of genus three and four},
S\'eminaires et congr\`es, 11, 125-141 (2005). %Web address:
%http://smf.math.fr/Publications/SeminairesCongres/$2005$/$11$/pdf/\\smf\_sem\_cong\_$11$\_$115$\_$141$.pdf.
\bibitem{ih81} Y. Ihara, \emph{Some remarks on the number of rational points of algebraic curves over finite fields},
 J. Fac. Sci. Univ. Tokyo, 28, 721-724 (1981).
\bibitem{mana} D. Maisner, E. Nart, \emph{Abelian surfaces over finite fields as Jacobians. With an appendix by Everett W. Howe}, Experiment. Math. 11, 321-337 (2002).
\bibitem{ms94} K. Murty, J. Scherk, \emph{Effective versions of the Chebotarev density theorem for function fields}, Comptes Rendus de l'Acad\'emie des Sciencies (Paris), S\'erie I, Math\'ematique, 319, 523-528 (1994).
\bibitem{nx} H. Niederreiter, C. Xing: \emph{Rational points on curves over finite fields: theory and applications}. Cambridge, Cambridge University Press (2001).
\bibitem{st} C. Stirpe, \emph{An upper bound for genus of a curve without points of small degree}, Phd Thesis at Universit\`a di Roma 'Sapienza' (2011).
\bibitem{st93} H. Stichtenoth: \emph{Algebraic function fields and codes}. Berlin, Springer-Verlag (1993).
\bibitem{we71} A. Weil: \emph{Courbes alg\'ebriques et vari\'et\'es ab\'eliennes}. Paris, Hermann (1971).
\end {thebibliography}
\end{document}